\documentclass[11pt]{amsart}
\usepackage{amssymb}
\usepackage{amsmath}
\usepackage{comment}
\usepackage{latexsym}
\usepackage{amsfonts}
\usepackage{graphicx}
\usepackage[normalem]{ulem}
\usepackage[utf8]{inputenc}
\usepackage[bookmarks=true,pdfstartview=FitH, pdfborder={0 0 0},
colorlinks=true,citecolor=blue, linkcolor=blue, urlcolor=blue, hypertexnames=false]{hyperref}
\newtheorem{theorem}{Theorem}
\newtheorem{acknowledgement}[theorem]{Acknowledgement}

\newtheorem{corollary}[theorem]{Corollary}
\newtheorem{definition}[theorem]{Definition}

\newtheorem{lemma}[theorem]{Lemma}

\newtheorem{proposition}[theorem]{Proposition}
\newtheorem{question}[theorem]{Question}
\newtheorem{remark}[theorem]{Remark}

\newcommand{\abs}[1]{\left\lvert #1 \right\rvert}

\newcommand{\F}{\mathbb{F}}
\newcommand{\Q}{\mathbb{Q}}
\newcommand{\R}{\mathbb{R}}
\newcommand{\RP}{\mathbb{R}P}

\newcommand{\tld}[1]{\widetilde{#1}}
\newcommand{\Z}{\mathbb{Z}}

\protected\def\vts{
  \ifmmode
    \mskip0.5\thinmuskip
  \else
    \ifhmode
      \kern0.08334em
    \fi
  \fi
}

\begin{document}

\title[Normal-Euler excess and packing]{Normal-Euler excess and codimension-zero packing in $4$-manifolds}

\author[B. Chow]{Bennett Chow}
\address{Department of Mathematics, University of California, San Diego, California 92093, USA.}

\author[M. Freedman]{Michael Freedman}
\address{Department of Mathematics, Harvard University, CMSA, Cambridge, Massachusetts 02139, USA.}

\date{\today}

\begin{abstract}
Let \(M\) be a closed connected oriented topological \(4\)-manifold. We prove
that if \(F_1,\dots,F_r\subset M\) are pairwise disjoint connected closed locally flat
nonorientable surfaces with nonorientable genera \(g_i\), same-sign twisted normal
Euler numbers \(e_i\), and
\[
[F_1]+\cdots+[F_r]=0\in H_2(M;\F_2),
\]
then the normal-Euler excess
\[
\sum_{i=1}^r\bigl(\abs{e_i}-2g_i\bigr)
\]
is bounded above by a constant depending only on \(M\). Thus same-sign
mod-\(2\)-null families of disjoint nonorientable surfaces in a fixed ambient
\(4\)-manifold have uniformly bounded total excess over Massey's \(S^4\) bound.
The proof combines tubing with the signature and Euler-characteristic formulas
for \(2\)-fold branched covers.

As corollaries, every closed oriented topological \(4\)-manifold contains only
finitely many pairwise disjoint locally flat copies of \(\RP^2\) with
\(\abs{e}>2\), and only finitely many pairwise disjoint tubular neighborhoods
modeled on real \(2\)-plane bundles over \(\RP^2\) whose total spaces are
orientable and whose twisted Euler numbers have absolute value greater than
\(2\). When \(M\) is a homology \(4\)-sphere, the ambient error term vanishes and
the theorem recovers Massey's sharp inequality \(\abs{e(F)}\le 2g(F)\) for
nonorientable surfaces in \(S^4\).

We also give a connected-boundary counterexample to a natural codimension-zero
packing question. Let \(P=\RP^3\setminus\operatorname{int}B^3\) and let
\(W=P\,\widetilde{\times}\,[0,1]\) denote the rounded product. Then \(W\) is a
compact connected oriented \(4\)-manifold with connected boundary. Whenever a
compact oriented \(4\)-manifold contains a bicollared copy of \(\RP^3\), it
contains arbitrarily many pairwise disjoint codimension-zero copies of \(W\);
in particular this holds in \(\RP^3\times S^1\). Nevertheless \(W\) does not
embed in \(\R^4\).
\end{abstract}

\maketitle

\section{Introduction}

A classical theorem of Massey \cite{Massey}, completing Whitney's conjecture by
an application of the Atiyah--Singer index theorem, states that if
\(F\subset S^4\) is a connected smoothly embedded nonorientable surface of
nonorientable genus \(g\), and if \(e(F)\) denotes the twisted normal Euler number
of \(F\), then
\[
\abs{e(F)}\le 2g.
\]
This inequality is sharp: for \(g=1\), equality is realized by the Veronese
surface, and in fact Massey proved more generally that for each nonorientable
genus \(g\), every value
\[
e(F)\in \{-2g,-2g+4,\ldots,2g\}
\]
occurs for some smooth embedding \(F\subset S^4\). It is natural to ask what
survives when the ambient manifold is an arbitrary closed oriented topological
\(4\)-manifold and the surface is locally flat topologically embedded.

For a fixed closed oriented topological \(4\)-manifold \(M\), however, one should
not expect a verbatim analogue of the \(S^4\) bound for an individual surface.
The purpose of this paper is to show that a strong substitute does hold for
disjoint families. If
\[
F_1,\dots,F_r\subset M
\]
are pairwise disjoint connected closed locally flat topologically embedded
nonorientable surfaces whose mod-\(2\) homology classes sum to zero and whose
twisted normal Euler numbers all have the same sign, then the total excess of
the normal Euler numbers over the Massey term \(2g_i\) is bounded by a constant
depending only on \(M\). In particular, a fixed closed oriented \(4\)-manifold
contains only finitely many disjoint projective planes with \(\abs{e}>2\).

One of the original motivations for the projective-plane case came from questions
about disjoint plane-bundle neighborhoods in \(4\)-manifolds arising in geometric
applications. A particularly relevant example comes from singularity formation
in compact \(4\)-dimensional Ricci flow. A singularity model is, by definition, a
complete Ricci flow obtained as a limit of rescalings of a solution, and a priori
it need not have bounded curvature. In \cite{CFSZ,CFSZCorr}, it was shown that if
a \(4\)-dimensional singularity model is also a steady Ricci soliton, then it must
in fact have bounded curvature. The proof is by contradiction: if the curvature
were unbounded, then a further pointed limit would be a nonflat Ricci-flat ALE
\(4\)-manifold; by taking a fixed compact core in this ALE limit and using the
pointed convergence maps at sufficiently separated basepoints, one obtains
arbitrarily many pairwise disjoint topological copies of that core in the
singularity model, and hence in the underlying compact \(4\)-manifold of the
original Ricci flow. This is ruled out by homological and index-theoretic
arguments. Thus, in that setting, a topological argument yields an analytic
estimate for \(4\)-dimensional Ricci flow.

However, bounded curvature still does not exclude the formation of blips in a
steady soliton singularity model; see \cite[Open Problem 5.1]{CKM}. The results
of the present paper rule out, for topological reasons, those hypothetical blips
whose compact cores contain, or are modeled on, real \(2\)-plane bundles over
\(\RP^2\) with orientable total space and twisted Euler number of absolute value
greater than \(2\), whenever the geometric limiting argument would force
arbitrarily many disjoint copies of such a core in the original compact
\(4\)-manifold. The results proved here are purely topological.

To state the theorem precisely, we fix conventions for twisted normal Euler
numbers.

\begin{definition}
Let \(X\) be a closed connected nonorientable surface locally flat topologically
embedded in an oriented topological \(4\)-manifold \(M\). Such an embedding has a
topological normal \(2\)-plane bundle; see, for example,
\cite[Theorem~9.6]{FreedmanQuinn}. Let
\[
\nu_M(X)\to X
\]
denote this normal bundle. Let
\[
\omega:\pi_1(X)\to\{\pm1\}
\]
be the orientation character of \(X\) (i.e., the homomorphism corresponding to
\(w_1(X)\) under the identification
\(H^1(X;\mathbb{Z}/2)\cong \operatorname{Hom}(\pi_1(X),\mathbb{Z}/2)\)), and let
\(\widetilde{\Z}\) denote the associated local coefficient system on \(X\).\footnote{For
background on the above identification and local coefficient systems, see, for
example, Hatcher's book \cite[Sections~3.1 and~3.H]{Hatcher}.}

Let \(\mathcal O_X\) denote the orientation local system of \(X\), and let
\(\mathcal O_{\nu}\) denote the orientation local system of \(\nu_M(X)\). Since
\(M\) is oriented and \(X\subset M\) is locally flat of codimension two, the
chosen orientation of \(M\) gives a canonical trivialization
\[
\mathcal O_X\otimes \mathcal O_{\nu}\cong \mathbb Z.
\]
Equivalently,
\[
w_1(\nu_M(X))=w_1(X),
\]
and the chosen orientation of \(M\) fixes the identifications of
\(\widetilde{\Z}\) with both \(\mathcal O_X\) and \(\mathcal O_{\nu}\).

Let \(p:D(\nu_M(X))\to X\) be the disk bundle, let \(S(\nu_M(X))\subset
D(\nu_M(X))\) be the sphere bundle, let
\[
u_{\nu_M(X)}\in H^2\!\bigl(D(\nu_M(X)),S(\nu_M(X));p^*\widetilde{\Z}\bigr)
\]
be the Thom class, and let \(s:X\to D(\nu_M(X))\) be the zero section.\footnote{For
background on Thom classes and the Thom isomorphism, see, for example,
\cite[Section~4.D]{Hatcher}.}
We define the twisted Euler class of \(\nu_M(X)\) by
\[
\chi(\nu_M(X)):=s^*u_{\nu_M(X)}\in H^2(X;\widetilde{\Z}).
\]

Since \(X\) is a closed connected \(2\)-manifold, it has a canonical twisted
fundamental class
\[
[X]_{\mathrm{tw}}\in H_2(X;\widetilde{\Z}),
\]
namely the Poincar\'e dual of \(1\in H^0(X;\Z)\).\footnote{For background on
Poincar\'e duality with orientation local systems, see, for example,
\cite[Sections~3.3 and~3.H]{Hatcher}.}
We define the twisted normal Euler number of \(X\subset M\) by
\[
e(X):=\langle \chi(\nu_M(X)),[X]_{\mathrm{tw}}\rangle\in \Z.
\]
Equivalently, after choosing any smooth structure on the abstract surface \(X\)
and any compatible smooth structure on the bundle
\[
\nu_M(X)\to X,
\]
\(e(X)\) is the algebraic zero count of a generic section of \(\nu_M(X)\), with
signs taken using the local coefficient system \(\widetilde{\Z}\).
\end{definition}

\begin{remark}
The chosen orientation of \(M\) fixes the sign convention for \(e(X)\); reversing
the orientation of \(M\) reverses the sign of \(e(X)\). In particular, for a fixed
oriented ambient manifold, the same-sign hypothesis in Theorem~\ref{thm:main} is
meaningful.
\end{remark}

Our main result is the following ambient-manifold version of Massey's inequality.
Here and below, we say that integers have the same sign if they are all
nonnegative or all nonpositive.

\begin{theorem}\label{thm:main}
Let \(M\) be a closed connected oriented topological \(4\)-manifold. Let
\[
F_1,\dots,F_r \subset M
\]
be pairwise disjoint connected closed locally flat topologically embedded
nonorientable surfaces. For each \(i\), let \(g_i\) be the nonorientable genus of
\(F_i\), so that \(\chi(F_i)=2-g_i\), and let
\[
e_i:=e(F_i)\in \Z
\]
be the twisted normal Euler numbers.

Assume that the integers \(e_1,\dots,e_r\) all have the same sign and that
\[
[F_1]+\cdots+[F_r]=0 \in H_2(M;\F_2).
\]
Then
\begin{equation}\label{eq:mainbound}
\sum_{i=1}^r \abs{e_i}
\le
2\sum_{i=1}^r g_i
+4\abs{\sigma(M)}+8b_1(M;\F_2)+4\chi(M)-8,
\end{equation}
where \(\sigma(M)\) denotes the signature of \(M\). Equivalently,
\begin{equation}\label{eq:excessbound}
\sum_{i=1}^r \bigl(\abs{e_i}-2g_i\bigr)
\le
4\abs{\sigma(M)}+8b_1(M;\F_2)+4\chi(M)-8.
\end{equation}
In particular, the right-hand side depends only on \(M\), so the bound is
uniform over all such embedded surfaces.
\end{theorem}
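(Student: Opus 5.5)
The plan is to reduce to a single connected surface and then read off the bound from the signature and Euler characteristic of a $2$-fold branched cover.

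\textbf{Step 1: tubing.} Since $M$ is connected, I will choose pairwise disjoint embedded arcs $\gamma_1,\dots,\gamma_{r-1}$, with $\gamma_j$ joining $F_j$ to $F_{j+1}$ and meeting $F_1\cup\cdots\cup F_r$ only in its endpoints. Tubing along these arcs gives a connected closed locally flat surface
\[
F=F_1\#\cdots\#F_r .
\]
It is nonorientable of genus $g=\sum_i g_i$, and it satisfies $[F]=\sum_i[F_i]=0\in H_2(M;\F_2)$.

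The normal bundle of $F$ is obtained from the $\nu_M(F_i)$ by a local modification near the tubes, where it is trivial. So the twisted Euler number is additive, $e(F)=\sum_i e_i$, once the tube orientations are chosen compatibly with the fixed orientation of $M$. Because the $e_i$ all have the same sign, $\abs{e(F)}=\sum_i\abs{e_i}$. This is the only place the same-sign hypothesis enters.

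\textbf{Step 2: the branched cover.} Since $[F]=0$ mod $2$, there is a class in $H^1(M\setminus F;\F_2)$ that evaluates nontrivially on a meridian of $F$. It defines a $2$-fold cover of $M\setminus F$, which completes to a closed oriented topological $4$-manifold $N$ branched along $F$. I will use the standard formulas, valid in the locally flat topological category:
\[
\chi(N)=2\chi(M)-\chi(F)=2\chi(M)-2+g,\qquad \sigma(N)=2\sigma(M)\mp\tfrac12 e(F).
\]
The sign in the signature formula depends only on conventions and is irrelevant after taking absolute values. The signature formula is the $G$-signature/Rokhlin formula, which I would justify topologically via Novikov additivity and a local computation on the disk bundle.

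\textbf{Step 3: the estimate.} From Step 2,
\[
\tfrac12\abs{e(F)}\le 2\abs{\sigma(M)}+\abs{\sigma(N)}.
\]
Using $\abs{\sigma(N)}\le b_2(N)=\chi(N)-2+2b_1(N)$ and substituting $\chi(N)$, this becomes
\[
\abs{e(F)}\le 4\abs{\sigma(M)}+2g+4\chi(M)-8+4b_1(N;\Q).
\]
Comparing with \eqref{eq:mainbound}, it remains to prove
\[
b_1(N;\Q)\le b_1(N;\F_2)\le 2\,b_1(M;\F_2).
\]

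\textbf{Step 4: the first Betti number of $N$ (the main obstacle).} For this bound I would use the deck involution $\tau$ on $N$, whose fixed set is $F$ and whose quotient is $N/\tau=M$.

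First, $H_1(N)$ is a quotient of $H_1(\tld{M\setminus F})$, since gluing in $F$ kills the lifts of meridians. So I would bound $\dim H_1(\tld{M\setminus F};\F_2)$ modulo the meridian classes. The tool is the Smith/transfer exact sequence
\[
\cdots\to H_1(M\setminus F;\F_2)\to H_1(\tld{M\setminus F};\F_2)\to H_1(M\setminus F;\F_2)\to\cdots.
\]
Together with $H_1(M\setminus F;\F_2)\cong H_1(M;\F_2)\oplus\langle\text{meridian}\rangle$, which holds because $[F]=0$ mod $2$, this should give a bound of twice $b_1(M;\F_2)$ once the meridian contributions are discarded.

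Carrying this out carefully, and checking that no extra contribution from $H_1(F)$ sneaks in, is where I expect the real work to lie. If the direct count fails, my fallback is to accept a weaker constant that still depends only on $M$. The qualitative uniformity statement would survive that.
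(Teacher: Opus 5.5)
Steps 1--3 follow the paper's proof essentially verbatim: tubing, Fox completion, $\chi(N)=2\chi(M)-\chi(F)$, the $2$-fold signature formula, and the same arithmetic. Using rational rather than $\F_2$ Betti numbers for $N$ changes nothing.

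\textbf{The gap is Step~4.} You only say the transfer sequence ``should give'' $b_1(N;\F_2)\le 2b_1(M;\F_2)$. Your fallback of a weaker constant would give only the qualitative uniformity, not \eqref{eq:mainbound} with its explicit right-hand side. Your route does close, in a few lines. Let $E$ be the exterior, $\phi\in H^1(E;\F_2)$ the class of the cover, $\mu$ a meridian, and $\tld{\mu}=p^{-1}(\mu)$, which is a single circle because $\phi(\mu)=1$.

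(i) Consider Mayer--Vietoris for $N=\tld{E}\cup p^{-1}(U)$. The map $H_1(\partial\tld{E};\F_2)\to H_1(p^{-1}(U);\F_2)\cong H_1(A;\F_2)$ is onto because $\partial\tld{E}\to A$ is a circle bundle. Hence $H_1(\tld{E};\F_2)\to H_1(N;\F_2)$ is onto, and it kills the fibre class $\tld{\mu}$. This is exactly the check that nothing from $H_1(F)$ sneaks in.

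(ii) The short exact sequence $0\to C_*(E;\F_2)\xrightarrow{\operatorname{tr}}C_*(\tld{E};\F_2)\xrightarrow{p_\#}C_*(E;\F_2)\to 0$ gives exactness of $H_1(E)\xrightarrow{\operatorname{tr}}H_1(\tld{E})\xrightarrow{p_*}H_1(E)$.
\begin{itemize}
\item The image of $p_*$ lies in $\ker\phi$, since loops in $\tld{E}$ project to loops on which $\phi$ vanishes. Because $H_1(E;\F_2)\cong H_1(M;\F_2)\oplus\langle\mu\rangle$ and $\phi(\mu)=1$, this kernel has dimension $b_1(M;\F_2)$.
\item Moreover $\tld{\mu}=\operatorname{tr}(\mu)\in\operatorname{im}(\operatorname{tr})=\ker p_*$. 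So $\dim\bigl(H_1(\tld{E};\F_2)/\langle\tld{\mu}\rangle\bigr)=\dim\bigl(\operatorname{im}(\operatorname{tr})/\langle\tld{\mu}\rangle\bigr)+\dim\operatorname{im}(p_*)$.
\item The first summand is at most $b_1(M;\F_2)$, because $\operatorname{im}(\operatorname{tr})/\langle\tld{\mu}\rangle$ is a quotient of $H_1(E;\F_2)/\langle\mu\rangle\cong H_1(M;\F_2)$.
\end{itemize}
Combining with (i), $b_1(N;\F_2)\le 2b_1(M;\F_2)$.

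For comparison, the paper gets the same bound from the Lee--Weintraub sequence $H_1(M,F;\F_2)\to H_1(N;\F_2)\to H_1(M;\F_2)\to 0$, together with the surjection $H_1(M;\F_2)\to H_1(M,F;\F_2)$. That sequence is the branched analogue of your transfer sequence. Your version has the mild advantage of needing only the transfer sequence of an honest double cover, rather than an extension of Lee--Weintraub to ENR pairs.

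One further caution: your parenthetical plan to prove the signature formula by Novikov additivity and a local computation on the disk bundle does not work as stated. Both disk bundles have signature $0$, since their rational $H_2$ vanishes when $F$ is nonorientable. Additivity therefore only reduces the claim to $\sigma(\tld{E})=2\sigma(E)-\tfrac12 e(F)$, which is a global $G$-signature ($\rho$-invariant) statement about the unbranched cover of the exterior. Cite the formula instead, as the paper does via Geske--Kjuchukova--Shaneson.
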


The two hypotheses serve different purposes. After tubing the \(F_i\) together to
a connected surface \(F\), the condition
\[
[F_1]+\cdots+[F_r]=0\in H_2(M;\F_2)
\]
is exactly what allows one to form a branched double cover over \(F\). The
same-sign assumption prevents cancellation in the identity
\[
e(F)=e(F_1)+\cdots+e(F_r),
\]
so that
\[
\abs{e(F)}=\sum_{i=1}^r \abs{e_i}.
\]

We will refer to the quantity on the left-hand side of \eqref{eq:excessbound} as
the \emph{normal-Euler excess} of the family \(F_1,\dots,F_r\).

When \(M=S^4\) (or, more generally, when \(M\) is a homology \(4\)-sphere), the
ambient error term on the right-hand side vanishes. In that case
Theorem~\ref{thm:main} recovers Massey's theorem \cite{Massey}: for any smoothly
embedded connected nonorientable surface \(F\subset S^4\) of nonorientable genus
\(g\),
\[
\abs{e(F)}\le 2g.
\]
Thus Theorem~\ref{thm:main} may be viewed as a closed-\(4\)-manifold
generalization of Massey's inequality, with an error term determined entirely by
the ambient manifold.

Two immediate consequences are the following.

\begin{corollary}\label{cor:projective-planes}
Let \(M\) be a closed connected oriented topological \(4\)-manifold. Then there
exists a constant \(B(M)\) such that \(M\) contains at most \(B(M)\) pairwise
disjoint locally flat topologically embedded copies of \(\RP^2\) whose twisted
normal Euler numbers satisfy \(\abs{e}>2\).
\end{corollary}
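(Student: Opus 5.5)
Corollary~\ref{cor:projective-planes} follows from Theorem~\ref{thm:main} by a pigeonhole argument. Suppose \(P_1,\dots,P_N\subset M\) are pairwise disjoint locally flat copies of \(\RP^2\) with \(\abs{e(P_j)}>2\). We need two facts about each \(P_j\).

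First, \(e(P_j)\) is an integer congruent to \(2\chi(\RP^2)=2\) modulo \(4\); this is the standard congruence for the twisted normal Euler number of a \(\Z/2\)-null surface. We cannot simply quote it here, because \(P_j\) need not be null in \(M\). So we use only the weaker consequence that we actually need. Since \(e(P_j)\) is an integer with \(\abs{e(P_j)}>2\), we have \(\abs{e(P_j)}\ge 3\). Hence
\[
\abs{e(P_j)}-2g(P_j)=\abs{e(P_j)}-2\ge 1
\]
for every \(j\). Each such projective plane therefore contributes at least \(1\) to the normal-Euler excess.

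Second, we must arrange the two hypotheses of Theorem~\ref{thm:main}. For the sign condition, split the family by the sign of \(e(P_j)\). One sign class contains at least \(N/2\) of the planes, and after reversing the orientation of \(M\) if necessary we may assume it is the positive class. Reversing orientation changes neither \(\abs{\sigma(M)}\) nor the other terms in the bound. For the homological condition, sort the planes of this sign class by their classes \([P_j]\in H_2(M;\F_2)\). This group is finite of order \(2^{b}\), where \(b=b_2(M;\F_2)\). By pigeonhole, some class \(c\) is shared by at least \(N/2^{b+1}\) planes. Within this sub-family we pair the planes off, using \([P_j]+[P_k]=2c=0\). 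Taking an even number \(2m\ge N/2^{b+1}-1\) of them, their \(\F_2\)-classes sum to zero and their Euler numbers all have the same sign.

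Applying inequality \eqref{eq:excessbound} to these \(2m\) surfaces gives
\[
2m\le 4\abs{\sigma(M)}+8b_1(M;\F_2)+4\chi(M)-8=:C(M).
\]
Therefore
\[
N\le 2^{b+1}\bigl(C(M)+1\bigr),
\]
which depends only on \(M\), and we set \(B(M)\) equal to this bound. If the extracted sub-family happens to be empty, the inequality \(N\le 2^{b+1}\) holds trivially.

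The only delicate point is bookkeeping. The pigeonhole must be carried out within a single sign class, and an even subfamily is needed so that the classes cancel mod \(2\). No real obstacle arises beyond this. In particular, using the lower bound \(\abs{e}\ge 3\) avoids any appeal to the mod \(4\) congruence.
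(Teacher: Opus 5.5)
Your argument is correct. Its skeleton matches the paper's: split by sign, extract a mod-$2$-null same-sign subfamily, use $\abs{e}\ge 3$ so each plane contributes at least $1$ to the excess, and apply \eqref{eq:excessbound}.

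The genuine difference is the extraction step. You pigeonhole over the $2^{b}$ elements of $H_2(M;\F_2)$ and pair off planes lying in a single class. The paper instead uses Lemma~\ref{lem:zero-sum}, which finds a zero-sum subfamily of size at least $s-b_2(M;\F_2)$ among the $s\ge\lceil m/2\rceil$ same-sign planes, letting planes in different classes cancel one another.

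Your route avoids the linear-algebra lemma but loses a factor of $2^{b}$. It gives $B(M)=2^{b+1}\bigl(C(M)+1\bigr)$, exponential in $b_2(M;\F_2)$. The paper gets the linear bound $B(M)=2\bigl(b_2(M;\F_2)+D(M)\bigr)$, where $D(M)$ is your $C(M)$. Since the corollary only asserts that some constant exists, either suffices. Within your framework you could keep the even part of every class at once, losing at most one plane per class, which gives $N\le 2\bigl(2^{b}+C(M)\bigr)$.

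Two small points. Reversing the orientation of $M$ is unnecessary, because the same-sign hypothesis of Theorem~\ref{thm:main} already allows all-nonpositive families. Your empty-subfamily case implicitly uses $C(M)\ge 0$. This holds because $C(M)=4\abs{\sigma(M)}+4b_2(M;\F_2)$ by Poincar\'e duality over $\F_2$, as the paper checks for $D(M)$.
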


If \(E\to \RP^2\) is a real \(2\)-plane bundle whose total space is orientable
(equivalently, \(w_1(E)=w_1(T\RP^2)\)), let
\[
\chi(E)\in H^2(\RP^2;\widetilde{\Z})
\]
denote its twisted Euler class, and write
\[
e_{\mathrm{tw}}(E):=\langle \chi(E),[\RP^2]_{\mathrm{tw}}\rangle\in \Z
\]
for its twisted Euler number.

\begin{corollary}\label{cor:plane-bundles}
Let \(M\) be a closed connected oriented topological \(4\)-manifold. Then \(M\)
contains only finitely many pairwise disjoint closed tubular neighborhoods of
locally flat topologically embedded copies of \(\RP^2\) with twisted normal Euler
number of absolute value greater than \(2\). In particular, \(M\) contains only
finitely many pairwise disjoint open subsets each homeomorphic to the total space
of a real \(2\)-plane bundle over \(\RP^2\) whose total space is orientable and
whose twisted Euler number has absolute value greater than \(2\).
\end{corollary}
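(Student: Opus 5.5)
The plan is to reduce both statements to Corollary~\ref{cor:projective-planes} by passing from each neighborhood to its zero section. Suppose \(N_1,\dots,N_k\subset M\) are pairwise disjoint closed tubular neighborhoods of locally flat copies \(P_1,\dots,P_k\) of \(\RP^2\), with \(\abs{e(P_j)}>2\) for each \(j\). Since \(P_j\subset N_j\) and the \(N_j\) are disjoint, the \(P_j\) are pairwise disjoint locally flat projective planes with \(\abs{e}>2\). Corollary~\ref{cor:projective-planes} then gives \(k\le B(M)\), which proves the first assertion.

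For the second assertion, let \(U_1,\dots,U_k\subset M\) be pairwise disjoint open subsets. Suppose there are homeomorphisms \(h_j:E_j\to U_j\), where each \(E_j\to\RP^2\) is a real \(2\)-plane bundle with orientable total space and \(\abs{e_{\mathrm{tw}}(E_j)}>2\). Let \(s_j\) be the zero section and set \(P_j:=h_j(s_j(\RP^2))\). The steps are as follows.

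\begin{enumerate}
\item[(i)] \(P_j\) is locally flat in \(M\). The zero section is locally flat in \(E_j\), since locally it is \(\R^2\times 0\subset\R^2\times\R^2\). Homeomorphisms onto open subsets preserve local flatness.
\item[(ii)] The normal bundle of \(P_j\) in \(M\) is identified with \(E_j\). Indeed, \(E_j\) is itself a normal bundle of its zero section, and \(h_j\) transports it to a normal bundle of \(P_j\) in \(U_j\), hence in \(M\). Uniqueness of topological normal bundles up to isotopy \cite[Section~9.3]{FreedmanQuinn} lets us use this one to compute \(e(P_j)\).
\item[(iii)] \(\abs{e(P_j)}=\abs{e_{\mathrm{tw}}(E_j)}\).
\end{enumerate}

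Given (i)--(iii), the \(P_j\) are pairwise disjoint locally flat projective planes with \(\abs{e}>2\). Corollary~\ref{cor:projective-planes} again gives \(k\le B(M)\).

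The only real point is step (iii), namely checking that the twisted Euler number is preserved up to sign. I would argue by naturality of the Thom class. The homeomorphism \(h_j\) pulls back the Thom class of the normal bundle of \(P_j\), defined using the local system \(\widetilde{\Z}\) fixed by the orientation of \(M\), to \(\pm\) the Thom class of \(E_j\). Likewise, the twisted fundamental class of \(P_j\) is the image of \(\pm[\RP^2]_{\mathrm{tw}}\).

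The global sign is determined by whether \(h_j\) preserves orientation with respect to the orientation on \(E_j\) used to define \(e_{\mathrm{tw}}(E_j)\). Since \(\RP^2\) and \(E_j\) are connected and all identifications of local systems are canonical up to one overall sign, at most an overall sign change occurs. This is consistent with the earlier Remark on reversing the orientation of \(M\).

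Alternatively, \(e(P_j)\) is the twisted algebraic intersection number of \(P_j\) with a generic push-off along a section. This count is computed inside \(U_j\cong E_j\), where it equals \(\pm e_{\mathrm{tw}}(E_j)\) by the equivalent description in the Definition. Either way, \(\abs{e(P_j)}=\abs{e_{\mathrm{tw}}(E_j)}>2\), which completes the argument.
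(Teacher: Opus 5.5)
Your proposal is correct and follows the paper's proof essentially verbatim: you pass to the zero sections, use that $U_j$ is open to identify $\nu_M(P_j)$ with $E_j$, and apply Corollary~\ref{cor:projective-planes}. Your tracking of the overall sign is a harmless refinement, since the paper simply asserts $e(P_i)=e_{\mathrm{tw}}(E_i)$, implicitly orienting $E_i$ via $\phi_i$ and $M$; the same goes for your appeal to uniqueness of topological normal bundles.
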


The theorem is therefore not merely a counting statement for projective planes; it
gives a uniform excess bound for arbitrary disjoint nonorientable surfaces.

The proof is short once the relevant branched-cover input is isolated. One first
tubes the surfaces \(F_i\) together to obtain a connected closed nonorientable
surface \(F\) with
\[
[F]=[F_1]+\cdots+[F_r]\in H_2(M;\F_2),\qquad
e(F)=\sum_{i=1}^r e_i,\qquad
g(F)=\sum_{i=1}^r g_i.
\]
The hypothesis \([F]=0\) then produces a connected \(2\)-fold branched cover
\[
p\colon N\to M
\]
where \(N\) is a connected closed \(4\)-manifold, branched over \(F\). One can
picture this cover on the exterior of \(F\). If \(U\) is a tubular neighborhood of
\(F\) and \(E:=M\setminus \operatorname{int}(U)\), then the hypothesis
\([F]=0\in H_2(M;\F_2)\) produces a class in \(H^1(E;\F_2)\) that evaluates
nontrivially on each meridian circle of \(F\). This class determines a connected
double cover of \(E\), and Fox's completion construction fills the cover back in
across \(U\) to give the desired branched cover over \(M\).

The key identities are
\[
\abs{\sigma(N)-2\sigma(M)}=\tfrac12\abs{e(F)}
\qquad\text{and}\qquad
\chi(N)=2\chi(M)-\chi(F),
\]
together with the estimate
\[
b_1(N;\F_2)\le 2b_1(M;\F_2).
\]
These bounds control \(b_2(N)\) in terms of \(g(F)\), and hence yield the desired
estimate for \(e(F)\).

The finiteness conclusions above concern disk-bundle neighborhoods over
nonorientable surfaces with positive normal-Euler excess. They do not, by
themselves, imply that an arbitrary compact oriented \(4\)-manifold that packs
asymptotically into a compact ambient \(4\)-manifold must embed in \(\R^4\). In the
last section we make this distinction explicit by constructing a compact connected
oriented \(4\)-manifold \(W\) with connected boundary that packs arbitrarily many
times into a compact oriented \(4\)-manifold but does not embed in \(\R^4\). The
example is
\[
W=(\RP^3\setminus \operatorname{int}B^3)\,\widetilde{\times}\,[0,1],
\]
the rounded product of a punctured \(\RP^3\) with an interval. The nonembedding
obstruction comes from Hantzsche's boundary linking-form obstruction. In the
smooth category, one can also see the nonembedding from Whitney's congruence for
the twisted normal Euler number of \(\RP^2\subset \R^4\).

The paper is organized as follows. In Section~2 we record the linear-algebra
lemma, the tubing construction, the branched-cover formulas, and the Betti-number
estimate needed in the proof. Section~3 proves Theorem~\ref{thm:main},
Section~4 deduces the projective-plane and plane-bundle corollaries, and
Section~5 answers a natural connected-boundary codimension-zero packing question
in the negative.

\section{Preliminaries}

We collect the three ingredients used in the proof.

\subsection{A linear-algebra lemma}

\begin{lemma}\label{lem:zero-sum}
Let \(V\) be a vector space over \(\F_2\) of dimension \(k\), e.g., \((\Z_2)^k\).
Given any \(m\) elements
\[
x_1,\dots,x_m\in V,
\]
there exists a (possibly empty) subcollection whose sum is zero and whose
cardinality is at least \(m-k\). In particular, if \(m>k\), then there exists a
\emph{nonempty} zero-sum subcollection of size at least \(m-k\).
\end{lemma}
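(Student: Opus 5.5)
We plan to prove Lemma~\ref{lem:zero-sum} by linear algebra over \(\F_2\). Consider the linear map
\[
\phi\colon \F_2^m\to V,\qquad (a_1,\dots,a_m)\mapsto \sum_{j=1}^m a_j x_j .
\]
Subcollections of \(\{x_1,\dots,x_m\}\) correspond to vectors \(a\in\F_2^m\), namely to their supports. A subcollection has zero sum exactly when the corresponding \(a\) lies in \(\ker\phi\). Its cardinality is the Hamming weight \(\abs{\operatorname{supp}(a)}\). So the lemma asks for an element of \(\ker\phi\) of weight at least \(m-k\).

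The direct approach via a basis works as follows. First choose a maximal linearly independent subfamily \(\{x_j : j\in J\}\) of the \(x_i\). Then \(\abs{J}=\operatorname{rank}\phi\le k\), and every \(x_i\) with \(i\notin J\) is a sum of elements \(x_j\) with \(j\in J\). The goal is an element of \(\ker\phi\) whose support contains all of the complement \(\{1,\dots,m\}\setminus J\), which has size at least \(m-k\).

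Construct it by letting \(a\) be the indicator vector of \(\{1,\dots,m\}\setminus J\) and setting \(y:=\phi(a)=\sum_{i\notin J}x_i\). Since \(y\) lies in the span of \(\{x_j\}_{j\in J}\), write \(y=\sum_{j\in T}x_j\) for some \(T\subseteq J\). Over \(\F_2\), the subcollection indexed by
\[
S:=(\{1,\dots,m\}\setminus J)\,\cup\, T
\]
then has sum \(y+y=0\). The union is disjoint, so
\[
\abs{S}\ge m-\abs{J}\ge m-k .
\]

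If \(m\le k\), the bound \(m-k\le 0\) is met by the empty subcollection, or by the same \(S\). If \(m>k\), then \(\abs{S}\ge m-k\ge 1\), so \(S\) is nonempty. This gives the second statement.

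There is no real obstacle. The one step to check carefully is that the \(x_j\) with \(j\in J\) span all of the \(x_i\), which is exactly what maximality of the independent subfamily gives, and that disjointness of \(\{1,\dots,m\}\setminus J\) and \(T\) makes the cardinality count exact. Repeated \(x_i\) and zero vectors cause no trouble, since \(J\) is chosen as a set of indices.
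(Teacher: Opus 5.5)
Your proposal is correct and is essentially the paper's proof: choose a basis of the span from among the \(x_i\), sum the remaining elements to get \(y\), express \(y\) over a subset of the basis indices, and take the disjoint union, which has size at least \(m-\operatorname{rank}\ge m-k\). The only difference is that you phrase subcollections as elements of \(\ker\phi\) for \(\phi\colon\F_2^m\to V\), and your \(J\) indexes the basis elements where the paper's \(J\) indexes their complement.
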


\begin{proof}
Let
\[
r:=\dim \operatorname{span}\{x_1,\dots,x_m\}\le k,
\]
and choose indices \(i_1,\dots,i_r\) such that
\[
x_{i_1},\dots,x_{i_r}
\]
form a basis of \(\operatorname{span}\{x_1,\dots,x_m\}\).

Let
\[
J:=\{1,\dots,m\}\setminus \{i_1,\dots,i_r\}.
\]
Then \(|J|=m-r\). Since each \(x_j\) with \(j\in J\) lies in
\(\operatorname{span}\{x_{i_1},\dots,x_{i_r}\}\), the vector
\[
y:=\sum_{j\in J} x_j
\]
also lies in that span. Because we are working over \(\F_2\), the coordinates of
\(y\) in the basis \(x_{i_1},\dots,x_{i_r}\) are either \(0\) or \(1\). Hence there
exists a subset \(I\subseteq \{i_1,\dots,i_r\}\) such that
\[
y=\sum_{i\in I} x_i.
\]
Therefore
\[
\sum_{j\in J} x_j+\sum_{i\in I} x_i=0.
\]
Thus the subcollection indexed by \(I\cup J\) has zero sum. Since
\(I\cap J=\varnothing\), its cardinality is
\[
|I\cup J|=|I|+|J|\ge |J|=m-r\ge m-k.
\]

If \(m>k\), then \(m-k>0\), so this zero-sum subcollection is nonempty.
\end{proof}

\subsection{Tubing disjoint surfaces}

\begin{lemma}\label{lem:tubing}
Let \(F_1,\dots,F_r\subset M\) be pairwise disjoint connected closed locally flat
topologically embedded nonorientable surfaces in a connected oriented topological
\(4\)-manifold \(M\). Then, after choosing disjoint embedded arcs joining the
\(F_i\), one can form a connected closed locally flat topologically embedded
ambient connected sum
\[
F := F_1\#\cdots\# F_r \subset M
\]
with the following properties:
\begin{align*}
[F] &= [F_1]+\cdots+[F_r] \in H_2(M;\F_2),\\
\chi(F) &= \chi(F_1)+\cdots+\chi(F_r)-2(r-1),\\
g(F) &= g_1+\cdots+g_r,\\
e(F) &= e(F_1)+\cdots+e(F_r).
\end{align*}
\end{lemma}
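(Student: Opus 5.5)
We build \(F\) by induction on \(r\), so it suffices to treat \(r=2\): two disjoint connected closed locally flat nonorientable surfaces \(F_1,F_2\subset M\). Since \(M\) is a connected \(4\)-manifold, choose a tame embedded arc \(\alpha\) from a point of \(F_1\) to a point of \(F_2\), meeting \(F_1\cup F_2\) only in its endpoints. By general position in dimension \(4\), arcs avoid the surfaces away from their endpoints, and at stage \(k\) an arc can also be chosen to avoid the previously added tubes. Near each endpoint, use the local flatness chart \((\R^4,\R^2)\). We may assume \(\alpha\) leaves the surface along a normal direction, after reparametrizing within the chart and using the topological normal bundle of \cite[Theorem~9.6]{FreedmanQuinn}. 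A closed regular neighborhood of \(\alpha\) is a \(4\)-ball \(B\cong D^1\times D^3\) that meets \(F_j\) in a standard flat disk \(D_j\subset \{\pm1\}\times D^3\). Set
\[
F:=(F_1\setminus \operatorname{int}D_1)\cup (F_2\setminus\operatorname{int}D_2)\cup \bigl(D^1\times \partial D_0\bigr),
\]
where \(D_0\subset D^3\) is a flat \(2\)-disk with \(\{\pm1\}\times D_0=D_1,D_2\). This surface is the connected sum \(F_1\# F_2\), which is connected, closed, and locally flat, since the tube is standard inside the ball. For general \(r\), we iterate with disjoint arcs forming a tree.

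The topological invariants follow directly. Each tube removes two disks and adds an annulus, so \(\chi\) drops by \(2\); over \(r-1\) tubings this gives \(\chi(F)=\sum\chi(F_i)-2(r-1)\). A connected sum of nonorientable surfaces is nonorientable, so \(2-g(F)=\sum(2-g_i)-2(r-1)\), which gives \(g(F)=\sum g_i\).

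For the mod \(2\) class, note that \(F\) and \(F_1\cup F_2\) agree outside \(B\). Inside \(B\), the two disks together with the tube form the boundary of the solid tube \(D^1\times D_0\); mod \(2\) we have \(\partial(D^1\times D_0)=D_1+D_2+D^1\times\partial D_0\). Hence \([F]=[F_1]+[F_2]\) in \(H_2(M;\F_2)\), and no orientation issues arise.

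The main obstacle is additivity of the twisted Euler number, because there are no global orientations. We will use the description as a count of zeros with local-system signs. Choose sections \(s_j\) of \(\nu_M(F_j)\) that are transverse to zero and nonvanishing on a neighborhood of \(D_j\). We can take them to be the constant normal vector \(v\) in the \(D^3\)-direction orthogonal to \(D_0\) in the chart; the tube \(D^1\times\partial D_0\) also has normal direction \(v\) over it. Then the \(s_j\) glue with \(v\) to a section of \(\nu_M(F)\) whose zeros are exactly those of \(s_1\) and \(s_2\). The local sign at each zero is computed from the orientation of \(M\) via \(\mathcal O_X\otimes\mathcal O_\nu\cong\Z\), that is, from local orientations of the surface and normal plane whose product is the ambient orientation. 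This is a purely local quantity, unchanged by tubing far away. Therefore \(e(F)=e(F_1)+e(F_2)\). The point to verify is that the twisted count is independent of how local orientations are chosen at each zero. This holds because flipping the surface orientation flips the normal orientation, so the sign is well defined. For a locally flat (non-smooth) surface, we will work in the normal bundle, where the count is defined, as in the Definition.
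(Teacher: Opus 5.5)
Your proposal is correct and follows essentially the same route as the paper. You reduce to a single tubing step along an arc with product neighborhood \(D^1\times D^3\), read off \(\chi\) and \(g\) from removing two disks and adding an annulus, and get \([F]=[F_1]+[F_2]\) from the mod-\(2\) boundary of the solid tube \(D^1\times D_0\). You then obtain additivity of \(e\) by gluing sections that equal a common constant normal field near the disks across the annulus, whose normal bundle is trivial. Your extra remark is a useful addition: the local index at each zero does not depend on the choice of local orientation, because flipping the surface orientation flips the normal orientation. The paper leaves this locality of the twisted zero count implicit.
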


\begin{proof}
Choose \(r-1\) pairwise disjoint embedded arcs whose union is a tree connecting
\(F_1,\dots,F_r\). It is enough to describe one tubing step, since iterating that
construction along these arcs gives the general case.

So first suppose \(r=2\). Let \(\gamma\) be an embedded arc joining \(F_1\) to
\(F_2\), disjoint from \(F_1\cup F_2\) except at its endpoints. Choose a closed
tubular neighborhood
\[
U\cong D^3\times [0,1]
\]
of \(\gamma\) such that, for some fixed equatorial disk \(D^2\subset D^3\),
\[
U\cap F_1 = D^2\times \{0\},
\qquad
U\cap F_2 = D^2\times \{1\}.
\]
Define \(F\) by removing the interiors of these two disks and inserting the annulus
\[
\partial D^2\times [0,1]\subset \partial U.
\]
This is the ambient connected sum \(F_1\# F_2\subset M\). The local product model
also shows that \(F\) is locally flat: outside \(U\) this is inherited from
\(F_1\cup F_2\), while inside \(U\cong D^3\times[0,1]\) the inserted annulus
\(\partial D^2\times[0,1]\) is locally flat. Geometrically, one removes one disk
from each surface and joins the resulting boundary circles by a tube. This makes
the formulas for \(\chi(F)\) and \(g(F)\) transparent: two disks are removed, an
annulus is inserted, and the two components become one.

The product \(D^2\times [0,1]\subset U\) is a compact \(3\)-manifold whose boundary
mod \(2\) is exactly the union of the two deleted disks and the inserted annulus.
Hence \(F\) is cobordant mod \(2\) in \(M\) to \(F_1\sqcup F_2\), and therefore
\[
[F]=[F_1]+[F_2]\in H_2(M;\F_2).
\]

Since the tubing removes two disks and adds an annulus, it changes Euler
characteristic by
\[
(-1)+(-1)+0=-2.
\]
Thus
\[
\chi(F)=\chi(F_1)+\chi(F_2)-2.
\]
Because every connected nonorientable surface \(G\) satisfies \(\chi(G)=2-g(G)\),
it follows that
\[
g(F)=g_1+g_2.
\]

It remains to prove additivity of the twisted normal Euler number. Use the
standard zero-count interpretation of the twisted Euler class. Choose generic
sections
\[
s_i\colon F_i\to \nu_M(F_i), \qquad i=1,2,
\]
with isolated zeros, all lying away from the disks \(U\cap F_i\). After a homotopy
supported near those disks, we may assume that each \(s_i\) is nowhere zero there
and, in the product coordinates on \(U\), is equal to the same constant normal
vector field along \(U\cap F_i\). The normal bundle of the annulus
\(\partial D^2\times [0,1]\) inside \(U\) is trivial, so these boundary values
extend across the annulus without introducing any zeros. Thus \(s_1\) and \(s_2\)
glue to a generic section \(s\) of \(\nu_M(F)\) whose zero set is exactly the
disjoint union of the zero sets of \(s_1\) and \(s_2\). Hence the algebraic zero
count defining the twisted Euler number is additive:
\[
e(F)=e(F_1)+e(F_2).
\]

Iterating this construction along the chosen \(r-1\) disjoint arcs yields
\(F_1\#\cdots\#F_r\). At each step the mod-\(2\) homology class and twisted normal
Euler number add, while the Euler characteristic decreases by \(2\); the genus
formula then follows from \(\chi(G)=2-g(G)\) for any connected nonorientable
surface \(G\). This proves the lemma.
\end{proof}

\subsection{Branched covers and signatures}

The next proposition is standard. The existence statement goes back to Fox
\cite{Fox}, while the signature formula is the \(2\)-fold case of the general
branched-cover signature theorem; in the topological locally flat category one
can cite Geske, Kjuchukova, and Shaneson \cite{GeskeKjuchukovaShaneson}.

Here and below, if \(F\subset M\) is any connected locally flat topologically
embedded closed surface, orientable or not, we write \(e(F)\) for the normal Euler
number of \(\nu_M(F)\); if \(F\) is orientable this is the self-intersection
number, and if \(F\) is nonorientable it is the twisted normal Euler number.

For readers who prefer a geometric picture, the branched cover is built in two
stages. First one removes a tubular neighborhood of the branching surface and
constructs the associated double cover of the exterior. Then one fills back in
across the removed neighborhood by the local model \((x,z)\mapsto (x,z^2)\). The
proof below expresses this construction in cohomological terms.

\begin{proposition}\label{prop:branched-cover}
Let \(M\) be a closed connected oriented topological \(4\)-manifold, and let
\(F\subset M\) be a connected locally flat topologically embedded closed surface
with
\[
[F]=0 \in H_2(M;\F_2).
\]
Then there exists a connected \(2\)-fold branched cover
\[
p\colon N\to M
\]
branched along \(F\). If \(A:=p^{-1}(F)\) denotes the ramification surface
upstairs, then \(p|_A\colon A\to F\) is a homeomorphism, \(N\) is a closed
connected oriented topological \(4\)-manifold, and
\begin{equation}\label{eq:sig-general}
\sigma(N)=2\sigma(M)-e(A)=2\sigma(M)-\tfrac12 e(F).
\end{equation}
Consequently,
\begin{equation}\label{eq:sig-abs}
\abs{\sigma(N)-2\sigma(M)}=\tfrac12\abs{e(F)}.
\end{equation}
Moreover,
\begin{equation}\label{eq:chi-branched}
\chi(N)=2\chi(M)-\chi(F).
\end{equation}
\end{proposition}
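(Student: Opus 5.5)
We construct the cover first on the exterior of \(F\) and then fill it back in. Let \(U\) be a closed tubular neighborhood of \(F\); it exists by local flatness, \cite[Theorem~9.6]{FreedmanQuinn}, and \(U\) is the disk bundle \(D(\nu_M(F))\). Let \(E:=M\setminus\operatorname{int}U\), so that \(\partial E=S(\nu_M(F))\).

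\textbf{Constructing the double cover of \(E\).} We want a class \(\alpha\in H^1(E;\F_2)\) whose value on every meridian circle of \(F\) is \(1\). Lefschetz duality gives \(H^1(E;\F_2)\cong H_3(E,\partial E;\F_2)\). Excision and Thom isomorphism identify \(H_3(M,E;\F_2)\cong H_3(U,\partial U;\F_2)\) with \(H_1(F;\F_2)\). We then use the exact sequence
\[
H_3(E,\partial E)\to H_3(M,U)\cong H_3(M,F)\to H_2(U,\partial U)\cong H_0(F;\F_2)
\]
or, more cleanly, the long exact sequence of the pair \((M,E)\) in cohomology:
\[
H^1(M;\F_2)\to H^1(E;\F_2)\xrightarrow{\ \delta\ } H^2(M,E;\F_2)\cong H^2(U,\partial U;\F_2)\cong H^0(F;\F_2)=\F_2\to H^2(M;\F_2).
\]
The last map sends the generator to the Thom class pushed forward, i.e. the Poincaré dual of \([F]\). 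That class vanishes by hypothesis, so the generator lifts to some \(\alpha\). The coboundary \(\delta\) is evaluation on a meridian fiber circle, so \(\alpha\) is nonzero on meridians. Connectedness of \(F\) makes all meridians freely homotopic, so \(\alpha\) is nonzero on every meridian. The double cover \(\tilde E\to E\) determined by \(\alpha\) is connected because \(\alpha\neq0\).

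\textbf{Filling in.} Over \(\partial E=S(\nu)\), the restricted cover is the fiberwise squaring cover of each circle fiber. This holds because \(\alpha\) restricted to \(S(\nu)\) is nontrivial on fibers, and the fiberwise \(z\mapsto z^2\) cover is the unique cover with that property up to twisting by classes pulled back from \(F\). Any such twist can be absorbed by modifying \(\alpha\) by elements of \(H^1(M)\) only if those elements extend, so we must be careful here. Rather than identify it with an a priori model, I would take Fox's completion \cite{Fox}. Each fiber circle lifts to a connected circle double covering it, so the cover of \(S(\nu)\) is a circle bundle \(S(\tilde\nu)\to F\), where \(\tilde\nu:=\nu\otimes_{\mathbb C}\nu\) in the fiberwise sense, i.e. the "square" bundle. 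We glue in its disk bundle \(D(\tilde\nu)\), mapping by fiberwise \(z\mapsto z^2\). This is well defined even for a nonorientable bundle, since squaring commutes with complex conjugation. The result is \(N=\tilde E\cup D(\tilde\nu)\), a closed connected topological 4-manifold with \(A\) the zero section, so \(p|_A\) is a homeomorphism. \(N\) is oriented by pulling back the orientation of \(M\) on \(\tilde E\), and this extends across the codimension-two set \(A\). The normal bundle of \(A\) is \(\tilde\nu\). Its twisted Euler number satisfies \(e(A)=2e(F)\) computed in \(N\)? No: downstairs \(p\) restricted to normal disks is degree two, so \(p^*\) of the Thom class of \(\nu\) is twice that of \(\tilde\nu\) on fibers. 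On the other hand, squaring doubles the Euler class, giving \(e(\tilde\nu)=2e(\nu)\) as abstract bundles over \(F\). The correct relation, as in the standard smooth case, is \(e(A)=\tfrac12 e(F)\): the self-intersection of \(A\) upstairs pushes forward to \(p_*[A]=[F]\), while \(p^*[F]=2[A]\), so \(2e(A)=p^*[F]\cdot[A]=[F]\cdot p_*[A]=e(F)\). I would adopt this intersection computation using twisted coefficients as the definition-level check.

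\textbf{Invariants.} The Euler characteristic formula follows from additivity: \(\chi(N)=2\chi(E)+\chi(A)\) and \(\chi(M)=\chi(E)+\chi(F)\), using \(\chi(S(\nu))=0\) and \(\chi(U)=\chi(F)\). This gives \(\chi(N)=2\chi(M)-\chi(F)\). The signature formula \(\sigma(N)=2\sigma(M)-e(A)\) is the \(2\)-fold case of the branched-cover signature theorem. In the locally flat topological setting we cite \cite{GeskeKjuchukovaShaneson}. Alternatively, one decomposes via the deck involution \(\tau\): \(\sigma(N)=\sigma(N/\tau)+\sigma(\tau)\)-type \(G\)-signature reasoning, with \(\sigma(\tau,N)=e(A)\) as the fixed-surface contribution, and \(\sigma(N)+\sigma(\tau,N)=2\sigma(M)\). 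Combining this with \(e(A)=\tfrac12 e(F)\) gives \eqref{eq:sig-general}, and \eqref{eq:sig-abs} follows immediately. I expect the main obstacle to be this signature step in the topological category, together with correctly tracking twisted orientations for \(e(A)=\tfrac12e(F)\) when \(F\) is nonorientable. I would rely on the cited theorem rather than reprove the \(G\)-signature formula topologically.
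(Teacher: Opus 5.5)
Your route is the paper's: you lift the generator of \(H^0(F;\F_2)\) through the cohomology sequence of \((M,E)\) using \([F]=0\), take the double cover of the exterior, complete it by Fox's construction, extend the orientation across \(A\), get \(\chi(N)\) by additivity, and quote Geske--Kjuchukova--Shaneson for \(\sigma(N)=2\sigma(M)-e(A)\). One small omission: ``connected because \(\alpha\neq0\)'' needs \(E\) to be connected. The paper gets this from general position (\(M\setminus F\) is connected and retracts onto \(E\)). Alternatively, every component of \(E\) must meet the connected circle bundle \(\partial E\), because \(M\) is connected.

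The real problem is the step relating \(e(A)\) and \(e(F)\). The local model \((x,z)\mapsto(x,z^2)\) is fiberwise squaring \emph{from} \(\nu_N(A)\) \emph{to} \(\nu_M(F)\). So \(\nu_M(F)\) is the fiberwise square of \(\nu_N(A)\), and the restricted cover of \(\partial E\) is the circle bundle of such a square root, singled out by \(\alpha|_{\partial E}\); Fox's completion needs nothing more. Your identification \(\nu_N(A)=\nu\otimes_{\mathbb C}\nu\) reverses the direction: squaring maps \(\nu\) to \(\nu\otimes\nu\), not conversely. That reversal is exactly what produces your contradictory line \(e(\widetilde\nu)=2e(\nu)\). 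With the direction corrected, ``squaring doubles the Euler class'' says \(e(F)=2e(A)\). This agrees with your correct remark that \(q^*u_F=2u_A\) because \(q\) has degree \(2\) on normal disks, and pulling that identity back along the zero sections is precisely how the paper proves \(e(F)=2e(A)\).

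The projection-formula argument you fall back on (\(p^*[F]=2[A]\), \(p_*[A]=[F]\)) is fine for orientable \(F\). For nonorientable \(F\), which is the case the main theorem needs, there is no integral class \([F]\in H_2(M)\). Nor does the orientation character of \(F\) generally extend to a local system on \(M\) (think of \(\RP^2\subset S^4\)). So the computation must be localized to \(U\) and \(p^{-1}(U)\) with twisted Thom classes, which is again the Thom-class argument. Keep that argument, and discard both the \(\nu\otimes\nu\) identification and the global intersection computation.
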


\begin{proof}
Let \(U\) be a closed tubular neighborhood of \(F\), and set
\[
E:=M\setminus \operatorname{int}(U).
\]
By the long exact sequence of the pair \((M,E)\), together with excision and the
Thom isomorphism (see, for example, \cite[Sections~2.1 and~4.D]{Hatcher})
\[
H^2(M,E;\F_2)\cong H^2(U,\partial U;\F_2)\cong H^0(F;\F_2),
\]
there is an exact segment
\[
H^1(M;\F_2)\longrightarrow H^1(E;\F_2)\longrightarrow H^0(F;\F_2)
\xrightarrow{\delta} H^2(M;\F_2),
\]
where \(\delta(1)=\operatorname{PD}[F]\). Since \([F]=0\) in \(H_2(M;\F_2)\), we
have \(\delta=0\). Because \(F\) is connected, \(H^0(F;\F_2)\cong \F_2\), so its
generator lifts to a class
\[
\phi\in H^1(E;\F_2).
\]
The exterior \(E\) is connected. Indeed, \(M\setminus F\) is connected: any two
points of \(M\setminus F\) can be joined by a path in \(M\), and by general
position the path may be perturbed rel endpoints to avoid the locally flat
codimension-two submanifold \(F\). Moreover, \(M\setminus F\) deformation retracts
onto \(E=M\setminus \operatorname{int}(U)\). Hence \(E\) is connected.

Under the Thom identification, the map \(H^1(E;\F_2)\to H^0(F;\F_2)\) records the
value of a class on a meridian circle of \(F\). Thus \(\phi\) evaluates
nontrivially on every meridian. In particular, \(\phi\neq 0\). Since \(E\) is
connected, the associated homomorphism
\[
\pi_1(E)\to \F_2
\]
is surjective. Hence \(\phi\) determines a connected double cover
\[
\tld{E}\to E;
\]
see, for example, \cite[Section~1.3]{Hatcher}.

By Fox's completion construction \cite{Fox}, this cover extends uniquely over
\(U\) to a branched cover
\[
p\colon N\to M
\]
with local model \((x,z)\mapsto (x,z^2)\); equivalently, away from \(F\) the cover
has two sheets, and along \(F\) those two sheets come together into one. In
particular, \(A:=p^{-1}(F)\) is a connected locally flat surface and
\(p|_A\colon A\to F\) is a homeomorphism. The orientation of \(M\setminus F\)
lifts to an orientation of \(N\setminus A\), and this extends uniquely across the
codimension-two subset \(A\). Thus \(N\) is a closed connected oriented
topological \(4\)-manifold.

The signature formula
\[
\sigma(N)=2\sigma(M)-e(A)
\]
is the \(2\)-fold case of \cite[Theorem~1]{GeskeKjuchukovaShaneson}, since the
only nontrivial branching index is \(r=2\) and
\[
\frac{r^2-1}{3}=\frac{4-1}{3}=1.
\]
Let
\[
q\colon \nu_N(A)\to \nu_M(F)
\]
be the induced map of normal bundles. Via the identification \(A\cong F\), the
map \(q\) covers the identity on \(F\), and in the local model it is fiberwise
\(z\mapsto z^2\), hence has degree \(2\) on each fiber. Therefore the Thom classes
(with the appropriate local coefficient systems) satisfy
\[
q^*u_F=2u_A,
\]
and pulling back by the zero section yields
\[
e(F)=2e(A).
\]
Substituting this into the signature formula gives \eqref{eq:sig-general}, and
\eqref{eq:sig-abs} follows immediately.

For the Euler characteristic, note that
\[
M=E\cup U,
\qquad
N=\tld{E}\cup p^{-1}(U),
\]
where \(U\) and \(p^{-1}(U)\) deformation retract onto \(F\) and \(A\),
respectively. Moreover, \(\partial E\) and \(\partial\tld{E}\) are circle bundles
over closed surfaces, so both have Euler characteristic \(0\). Hence
\[
\chi(M)=\chi(E)+\chi(F)
\]
and
\[
\chi(N)=\chi(\tld{E})+\chi(A)=2\chi(E)+\chi(F),
\]
since \(\chi(A)=\chi(F)\). Therefore
\[
\chi(N)=2(\chi(M)-\chi(F))+\chi(F)=2\chi(M)-\chi(F),
\]
which is \eqref{eq:chi-branched}. This is the usual bookkeeping for a two-fold
branched cover: away from the branch locus the cover is \(2\)-to-\(1\), while
along \(F\) the two sheets coalesce, so one subtracts exactly one copy of
\(\chi(F)\).
\end{proof}

\begin{proposition}[Lee--Weintraub exact sequence]\label{prop:lee-weintraub-enr}
Let \(p\colon N\to M\) be a connected \(2\)-fold branched cover of compact ENR
pairs, with branch locus \(F\subset M\). Then there is a long exact sequence with
\(\F_2\)-coefficients
\[
\cdots \to H_i(M,F;\F_2)\to H_i(N;\F_2)\to H_i(M;\F_2)\to
H_{i-1}(M,F;\F_2)\to \cdots .
\]
\end{proposition}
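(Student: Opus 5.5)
We use the deck involution \(\tau\colon N\to N\) of the \(2\)-fold branched cover. The quotient is \(N/\tau=M\), and \(\tau\) fixes \(A=p^{-1}(F)\), which \(p\) maps homeomorphically onto \(F\). The exact sequence should come from the Smith-theory (transfer) short exact sequence of chain complexes with \(\F_2\)-coefficients. The natural choice is to work with \(\tau\)-invariant cellular (or singular) chains. By the ENR hypothesis, together with a triangulation argument or equivariant CW approximation, we may assume \(N\) has a \(\tau\)-CW structure. We choose it so that \(A\) is a subcomplex fixed pointwise and the cells off \(A\) are freely permuted in pairs. Then the quotient \(M\) inherits a CW structure with \(F\) a subcomplex.

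Write \(C_*=C_*(N;\F_2)\) and let \(\rho=1+\tau\). The short exact sequence I will set up is
\[
0\longrightarrow C_*(M,F;\F_2)\xrightarrow{\;t\;} C_*(N;\F_2)\xrightarrow{\;p_\#\;} C_*(M;\F_2)\longrightarrow 0 .
\]
Here \(t\) is the transfer: it sends a cell \(e\) of \(M\setminus F\) to the sum \(\tilde e+\tau\tilde e\) of its two lifts. This map kills cells of \(F\) because a fixed cell \(a\) gives \(a+\tau a=2a=0\) mod \(2\). So \(t\) is well defined on \(C_*(M,F)\), and it commutes with boundaries. The key steps, in order, are the following.

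\begin{enumerate}
\item Injectivity of \(t\). Images of distinct free orbits have disjoint supports, so \(t\) is injective on \(C_*(M,F)\).
\item Surjectivity of \(p_\#\). Each cell of \(M\) has a lift, so \(p_\#\) is onto.
\item Exactness in the middle. A chain \(c\in C_*(N)\) with \(p_\#c=0\) mod \(2\) has no fixed cells, since those map bijectively to cells of \(F\). On each free orbit it has coefficients \((\lambda,\lambda)\), because mod \(2\) the two lifts must cancel. Hence \(c=\rho(c')\) lies in the image of \(t\).
\end{enumerate}

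The long exact homology sequence of this short exact sequence is exactly the stated one. For ENR pairs one replaces cellular chains by singular chains and checks invariance of the resulting homology. Alternatively, one reduces to the cellular case via a \(\tau\)-equivariant triangulation, which exists for the locally flat branched covers used here by the local model \((x,z)\mapsto(x,z^2)\) over a handle decomposition. In practice one can also cite Lee--Weintraub directly.

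The main obstacle is producing the equivariant cell structure, or equivalently justifying singular-chain exactness, in the topological category. Here \(F\) is only locally flat and \(M\) may not be triangulable. I would first try singular chains with the transfer defined on simplices of \(M\) not meeting \(F\), using small simplices subordinate to a cover by \(E\)-type sets and tubular pieces. If that stalls, I would use the fact that the singular homology of these pairs agrees with Čech homology for compact ENRs, and build the sequence from the pieces \(\tld E\) and \(p^{-1}(U)\) via Mayer--Vietoris. On \(p^{-1}(U)\), the branched cover is the standard model over the disk bundle, which has an explicit \(\tau\)-CW structure.
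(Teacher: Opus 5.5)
Your chain-level argument is correct, and it is the Lee--Weintraub proof that the paper relies on. For a \(\tau\)-cell structure in which \(A\) is a fixed subcomplex and the other cells are freely permuted, the sequence \(0\to C_*(M,F;\F_2)\xrightarrow{t}C_*(N;\F_2)\xrightarrow{p_\#}C_*(M;\F_2)\to 0\) is exact. The paper's proof is just this citation plus the assertion that the argument carries over to singular chains.

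\textbf{The gap.} The passage to the topological category is missing, and ``we may assume \(N\) has a \(\tau\)-CW structure'' is not a valid reduction.
\begin{enumerate}
\item Closed topological \(4\)-manifolds need not be triangulable or admit handle decompositions; Freedman's \(E_8\)-manifold is an example. So neither your triangulation nor your ``local model over a handle decomposition'' is available.
\item An equivariant CW approximation only produces a homotopy-equivalent complex. Your exactness check, by contrast, uses cell by cell that fixed cells map homeomorphically onto cells of \(F\) and that free cells come in \(\tau\)-pairs over a single cell.
\item The argument also does not go through verbatim for singular chains \(S_*\) with \(\F_2\)-coefficients. The map \(p_\#\colon S_*(N)\to S_*(M)\) is not surjective: a singular \(2\)-simplex in a normal disk of \(F\) whose boundary winds once around \(F\) has no lift, because \(z\mapsto z^2\) has no continuous square root along that boundary.
\item Middle exactness fails as well. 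A simplex \(\sigma\) crossing \(A\) can be composed with \(\tau\) on one side of \(A\). This produces \(\sigma'\notin\{\sigma,\tau\sigma\}\) with \(p\sigma'=p\sigma\), so \(\sigma+\sigma'\in\ker p_\#\setminus\rho S_*(N)\).
\item Passing to small simplices does not help, since both phenomena occur at every scale near \(F\).
\end{enumerate}

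\textbf{How to close it.} Your Mayer--Vietoris idea works, with no \v{C}ech theory needed, if you carry it out at the chain level using the disk-bundle structure.
\begin{itemize}
\item Let \(r\colon U\to F\) and \(\tld r\colon p^{-1}(U)\to A\) be the bundle retractions. Then \(r\circ p=p\circ\tld r\) and \(\tld r\circ\tau=\tld r\).
\item The algebraic mapping cone of \(S_*(\partial E)\to S_*(E)\oplus S_*(F)\), with components the inclusion and \(r\), computes \(H_*(M;\F_2)\). The analogous cone built from \(\partial\tld E\), \(\tld E\), \(A\) and \(\tld r\) computes \(H_*(N;\F_2)\).
\item The maps \(\tld E\to E\) and \(\partial\tld E\to\partial E\) are genuine double covers, and \(A\to F\) is a homeomorphism. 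Hence \(p_\#\) between the two cones is surjective in each degree.
\item Over \(\F_2\), the kernel of \(p_\#\) is the image, under \((t,t,0)\), of the cone of \(S_*(\partial E)\to S_*(E)\). The \(A\)-component vanishes because \(\tld r(\tld\sigma)+\tld r(\tau\tld\sigma)=2\tld r(\tld\sigma)=0\).
\item That cone computes \(H_*(E,\partial E;\F_2)\cong H_*(M,U;\F_2)\cong H_*(M,F;\F_2)\).
\end{itemize}
The long exact sequence of this short exact sequence of cones is then the desired one. This proves the proposition for the locally flat branched covers of Proposition~\ref{prop:branched-cover}, which is all that Lemma~\ref{lem:b1-bound} uses. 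It also supplies what the paper's one-line appeal to singular chains leaves out.
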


\begin{proof}
This is the form of \cite[Theorem~1]{LeeWeintraub} used below. Lee and Weintraub
state the result in the simplicial setting, but their proof is chain-level and
uses only the double branched covering structure and transfer over \(\F_2\). The
same argument applies to singular chains for compact ENR pairs.
\end{proof}

We have the following Betti-number estimate.

\begin{lemma}\label{lem:b1-bound}
Let \(p\colon N\to M\) be a connected \(2\)-fold branched cover of a closed
connected topological \(4\)-manifold \(M\), branched along a connected closed
locally flat surface \(F\subset M\). Then
\[
b_1(N;\F_2)\le 2b_1(M;\F_2).
\]
In particular,
\[
b_1(N;\R)\le 2b_1(M;\F_2).
\]
\end{lemma}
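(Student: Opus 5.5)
We apply the Lee--Weintraub exact sequence (Proposition~\ref{prop:lee-weintraub-enr}) in degree one. Its segment
\[
H_1(M,F;\F_2)\longrightarrow H_1(N;\F_2)\longrightarrow H_1(M;\F_2)
\]
is exact, so
\[
b_1(N;\F_2)\le \dim H_1(M,F;\F_2)+b_1(M;\F_2).
\]
It therefore suffices to show \(\dim H_1(M,F;\F_2)\le b_1(M;\F_2)\). For this we use the long exact sequence of the pair \((M,F)\):
\[
H_1(F;\F_2)\to H_1(M;\F_2)\to H_1(M,F;\F_2)\to H_0(F;\F_2)\to H_0(M;\F_2).
\]
Since \(F\) is nonempty and connected and \(M\) is connected, the last map \(H_0(F;\F_2)\cong\F_2\to H_0(M;\F_2)\cong\F_2\) is an isomorphism. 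Exactness then makes \(H_1(M;\F_2)\to H_1(M,F;\F_2)\) surjective, so \(\dim H_1(M,F;\F_2)\le b_1(M;\F_2)\). Combining the two inequalities gives \(b_1(N;\F_2)\le 2b_1(M;\F_2)\).

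For the real statement, the universal coefficient theorem gives \(H_1(N;\F_2)\cong H_1(N;\Z)\otimes\F_2\oplus \operatorname{Tor}(H_0(N;\Z),\F_2)\). The torsion term vanishes because \(H_0\) is free. Moreover, \(H_1(N;\Z)\) is finitely generated because \(N\) is a compact manifold, hence has the homotopy type of a finite CW complex (or is at least a compact ENR). Its rank is therefore at most \(\dim_{\F_2} H_1(N;\Z)\otimes\F_2\), which gives \(b_1(N;\R)\le b_1(N;\F_2)\le 2b_1(M;\F_2)\).

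The main point to verify is that the hypotheses of Proposition~\ref{prop:lee-weintraub-enr} hold, namely that \((M,F)\) and \((N,A)\) are compact ENR pairs. Compact topological manifolds are ENRs, and a locally flat closed surface is a closed submanifold, hence also an ENR. So this step is routine, and the rest of the argument is just exactness bookkeeping.
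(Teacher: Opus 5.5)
Your proof is correct and follows the paper's argument essentially step for step. The Lee--Weintraub sequence in degree one gives \(b_1(N;\F_2)\le b_1(M;\F_2)+\dim H_1(M,F;\F_2)\), the long exact sequence of \((M,F)\) bounds the relative term by \(b_1(M;\F_2)\), and the universal coefficient theorem handles the real Betti number. The only minor differences are that you observe the inequality needs only exactness at \(H_1(N;\F_2)\), whereas the paper also records surjectivity onto \(H_1(M;\F_2)\), and that you explicitly check the ENR hypotheses and the finite generation of \(H_1(N;\Z)\), which the paper leaves implicit.
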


\begin{remark}
At this point the mechanism of the proof is visible. After tubing, the same-sign
assumption turns \(\sum_i \abs{e_i}\) into \(\abs{e(F)}\). Proposition~\ref{prop:branched-cover}
converts \(\abs{e(F)}\) into a signature defect of the branched cover, while
\eqref{eq:chi-branched} and Lemma~\ref{lem:b1-bound} bound \(b_2(N)\) by a
quantity linear in \(g(F)\). The theorem follows by comparing these two pieces of
information.
\end{remark}

\begin{proof}
By Proposition~\ref{prop:lee-weintraub-enr}, there is a long exact sequence with
\(\F_2\)-coefficients
\[
\cdots \to H_i(M,F;\F_2)\to H_i(N;\F_2)\to H_i(M;\F_2)\to
H_{i-1}(M,F;\F_2)\to \cdots .
\]
Taking \(i=1\), we obtain
\[
H_1(M,F;\F_2)\to H_1(N;\F_2)\to H_1(M;\F_2)\to H_0(M,F;\F_2).
\]
Since \(F\neq\varnothing\) and \(M\) is connected, \(H_0(M,F;\F_2)=0\). Hence
\[
H_1(M,F;\F_2)\to H_1(N;\F_2)\to H_1(M;\F_2)\to 0
\]
is exact, and therefore
\[
b_1(N;\F_2)\le b_1(M;\F_2)+b_1(M,F;\F_2).
\]

Now consider the long exact sequence of the pair \((M,F)\):
\[
H_1(F;\F_2)\to H_1(M;\F_2)\to H_1(M,F;\F_2)\to
H_0(F;\F_2)\to H_0(M;\F_2).
\]
Because \(F\) and \(M\) are connected, the map
\[
H_0(F;\F_2)\to H_0(M;\F_2)
\]
is an isomorphism. It follows that
\[
H_1(M;\F_2)\to H_1(M,F;\F_2)
\]
is surjective, so
\[
b_1(M,F;\F_2)\le b_1(M;\F_2).
\]
Combining the two inequalities gives
\[
b_1(N;\F_2)\le 2b_1(M;\F_2).
\]

Finally, by the universal coefficient theorem for homology,
\[
H_1(N;\F_2)\cong H_1(N;\Z)\otimes \F_2.
\]
If
\[
H_1(N;\Z)\cong \Z^r\oplus T
\]
with \(T\) finite, then
\[
b_1(N;\F_2)=r+\dim_{\F_2}(T\otimes \F_2)\ge r=b_1(N;\R).
\]
Hence
\[
b_1(N;\R)\le b_1(N;\F_2)\le 2b_1(M;\F_2),
\]
as claimed.
\end{proof}

\section{Proof of the main theorem}

\begin{proof}[Proof of Theorem~\ref{thm:main}]
Apply Lemma~\ref{lem:tubing} to form the ambient connected sum
\[
F:=F_1\#\cdots\#F_r\subset M.
\]
Then \(F\) is connected and nonorientable, and
\[
g(F)=\sum_{i=1}^r g_i,\qquad
[F]=[F_1]+\cdots+[F_r]=0\in H_2(M;\F_2),\qquad
e(F)=\sum_{i=1}^r e_i.
\]
Because the integers \(e_1,\dots,e_r\) all have the same sign, we have
\[
\abs{e(F)}=\sum_{i=1}^r \abs{e_i}.
\]

By Proposition~\ref{prop:branched-cover}, there exists a connected \(2\)-fold
branched cover
\[
p\colon N\to M
\]
branched along \(F\), and
\[
\frac12\sum_{i=1}^r \abs{e_i}
=
\frac12\abs{e(F)}
=
\abs{\sigma(N)-2\sigma(M)}.
\]
Hence
\begin{equation}\label{eq:sigestimate-proof}
\frac12\sum_{i=1}^r \abs{e_i}
\le
\abs{\sigma(N)}+2\abs{\sigma(M)}.
\end{equation}
Since
\[
\abs{\sigma(N)}\le b_2(N;\R)\le b_2(N;\F_2),
\]
it remains to bound \(b_2(N;\F_2)\).

Again by Proposition~\ref{prop:branched-cover},
\[
\chi(N)=2\chi(M)-\chi(F)=2\chi(M)+g(F)-2.
\]
Because \(N\) is a closed connected \(4\)-manifold, Poincar\'e duality over \(\F_2\)
(see, for example, \cite[Section~3.3]{Hatcher}) gives
\[
\chi(N)=2-2b_1(N;\F_2)+b_2(N;\F_2).
\]
Therefore
\[
b_2(N;\F_2)=\chi(N)-2+2b_1(N;\F_2).
\]
Using Lemma~\ref{lem:b1-bound}, we obtain
\begin{align*}
b_2(N;\F_2)
&\le \bigl(2\chi(M)+g(F)-2\bigr)-2+4b_1(M;\F_2)\\
&=2\chi(M)+g(F)-4+4b_1(M;\F_2).
\end{align*}
Substituting this into \eqref{eq:sigestimate-proof} yields
\[
\frac12\sum_{i=1}^r \abs{e_i}
\le
2\chi(M)+g(F)-4+4b_1(M;\F_2)+2\abs{\sigma(M)}.
\]
Since \(M\) is a closed connected \(4\)-manifold,
\[
b_2(M;\F_2)=\chi(M)-2+2b_1(M;\F_2),
\]
so the previous inequality becomes
\[
\frac12\sum_{i=1}^r \abs{e_i}
\le
g(F)+2b_2(M;\F_2)+2\abs{\sigma(M)}.
\]
Recalling that \(g(F)=\sum_{i=1}^r g_i\) and multiplying by \(2\), we get
\[
\sum_{i=1}^r \abs{e_i}
\le
2\sum_{i=1}^r g_i + 4\abs{\sigma(M)}+4b_2(M;\F_2).
\]
Finally, using
\[
4b_2(M;\F_2)=8b_1(M;\F_2)+4\chi(M)-8,
\]
this is exactly \eqref{eq:mainbound}. Subtracting \(2\sum_{i=1}^r g_i\) from both
sides gives \eqref{eq:excessbound}.
\end{proof}

\section{Projective planes and plane bundles}

We now deduce the projective-plane version.

\begin{proof}[Proof of Corollary~\ref{cor:projective-planes}]
Let
\[
P_1,\dots,P_m\subset M
\]
be pairwise disjoint locally flat topologically embedded copies of \(\RP^2\) with
\(\abs{e(P_i)}>2\). At least \(\lceil m/2\rceil\) of the integers \(e(P_i)\) have
the same sign; after reindexing, assume this is true for
\[
P_1,\dots,P_s,
\qquad s\ge \left\lceil \frac m2\right\rceil.
\]

Set
\[
k:=b_2(M;\F_2)
\]
and
\[
D(M):=4\abs{\sigma(M)}+8b_1(M;\F_2)+4\chi(M)-8.
\]
Since \(M\) is a closed connected \(4\)-manifold,
\[
D(M)=4\abs{\sigma(M)}+4b_2(M;\F_2)\ge 0.
\]

If \(s\le k\), then
\[
m\le 2s\le 2k.
\]
So it remains to consider the case \(s>k\). By Lemma~\ref{lem:zero-sum}, among the
mod-\(2\) classes
\[
[P_1],\dots,[P_s]\in H_2(M;\F_2)
\]
there is a nonempty zero-sum subcollection of size at least \(s-k\). After
relabeling, assume
\[
[P_1]+\cdots+[P_n]=0,
\qquad
n\ge s-k>0.
\]

Now apply Theorem~\ref{thm:main} to \(P_1,\dots,P_n\). Since each \(P_i\cong
\RP^2\) has nonorientable genus \(1\) and \(\abs{e(P_i)}>2\), each term
\[
\abs{e(P_i)}-2
\]
is at least \(1\). Hence
\[
n\le \sum_{i=1}^n \bigl(\abs{e(P_i)}-2\bigr)\le D(M).
\]
Therefore
\[
\left\lceil \frac m2\right\rceil-k\le s-k\le n\le D(M),
\]
so
\[
m\le 2\bigl(k+D(M)\bigr).
\]

Thus the corollary holds, for example with
\[
B(M):=2\bigl(b_2(M;\F_2)+D(M)\bigr).
\]
\end{proof}

\begin{proof}[Proof of Corollary~\ref{cor:plane-bundles}]
For the first assertion, let
\[
U_1,\dots,U_m\subset M
\]
be pairwise disjoint tubular neighborhoods of locally flat topologically embedded
copies
\[
P_1,\dots,P_m\subset M
\]
of \(\RP^2\), and assume that \(\abs{e(P_i)}>2\) for each \(i\). Since the
\(U_i\) are pairwise disjoint, the zero-sections \(P_i\subset U_i\) are pairwise
disjoint locally flat topologically embedded copies of \(\RP^2\) in \(M\). By
Corollary~\ref{cor:projective-planes}, there can be only finitely many such
\(P_i\), and hence only finitely many such pairwise disjoint tubular neighborhoods.

For the second assertion, let
\[
V_1,\dots,V_m\subset M
\]
be pairwise disjoint open subsets, and suppose that each \(V_i\) is homeomorphic to
the total space of a real \(2\)-plane bundle
\[
\pi_i\colon E_i\to \RP^2
\]
whose total space is orientable and whose twisted Euler number has absolute value
greater than \(2\). Choose a homeomorphism
\[
\phi_i\colon E_i\to V_i.
\]
Let \(Z_i\subset E_i\) denote the zero-section and set
\[
P_i:=\phi_i(Z_i)\subset V_i\subset M.
\]
Then the \(P_i\) are pairwise disjoint locally flat topologically embedded copies
of \(\RP^2\).

Moreover, since \(V_i\) is open in \(M\), we have
\[
\nu_M(P_i)\cong \nu_{V_i}(P_i).
\]
Under the homeomorphism \(\phi_i\), the normal bundle \(\nu_{V_i}(P_i)\) is
identified with the normal bundle of the zero-section \(Z_i\subset E_i\), and the
latter is canonically isomorphic to \(E_i\) itself. Therefore the twisted normal
Euler number of \(P_i\) in \(M\) is exactly \(e_{\mathrm{tw}}(E_i)\), so in
particular
\[
\abs{e(P_i)}>2.
\]
Applying Corollary~\ref{cor:projective-planes} again, we conclude that there can
be only finitely many such pairwise disjoint open subsets \(V_i\).

Equivalently, after choosing bundle metrics on the \(E_i\), the images under the
\(\phi_i\) of the closed unit disk bundles are pairwise disjoint compact tubular
neighborhoods in \(M\), reducing the second assertion to the first.
\end{proof}

\section{A connected-boundary packing counterexample}

Before giving the connected-boundary counterexample, we first recall that general
codimension-zero packing phenomena can be very different without orientability or
without a positive normal-Euler-excess hypothesis. For example, the nonorientable
\(4\)-manifold \(\RP^2\times S^2\) contains infinitely many pairwise disjoint copies
of \(\RP^2\times D^2\): if \(D_1,D_2,\dots\subset S^2\) are pairwise disjoint
embedded closed disks, then
\[
\RP^2\times D_j\subset \RP^2\times S^2
\]
are pairwise disjoint and each is homeomorphic to \(\RP^2\times D^2\). This example
is not a counterexample to Corollary~\ref{cor:plane-bundles}; rather, it
illustrates why the codimension-zero packing question considered below is broader
than the positive-excess disk-bundle situation.

Even in the oriented category, however, connected boundary alone is not enough to
force an asymptotically packable compact \(4\)-manifold to embed in \(\R^4\). The
following question is a natural strengthening of the disconnected-boundary product
examples.

\begin{question}\label{ques:connected-boundary-packing}
Does there exist a compact connected oriented \(4\)-manifold \(W\) with connected
boundary and a compact \(4\)-manifold \(N\) such that, for every \(n\), there are
pairwise disjoint codimension-zero embeddings
\[
\bigsqcup_{i=1}^n W\hookrightarrow \operatorname{int}(N),
\]
but \(W\) does not embed in \(\R^4\)?
\end{question}

The answer is yes.

Let
\[
P:=\RP^3\setminus \operatorname{int}B^3,
\]
where \(B^3\subset \RP^3\) is a smoothly embedded closed \(3\)-ball, and let
\[
W:=P\,\widetilde{\times}\,[0,1]
\]
denote the compact \(4\)-manifold obtained from the product with corners
\(P\times[0,1]\) by rounding the corners. Thus \(W\) is diffeomorphic to any
standard smoothing of \(P\times[0,1]\) near \(\partial P\times\{0,1\}\).

We shall use the following elementary collar-shrinking observation. If a compact
topological \(4\)-manifold \(Q\) with boundary embeds topologically in the
interior of a topological \(4\)-manifold \(X\), then a collar of \(\partial Q\) in
\(Q\) may be pushed slightly inward. The resulting collar-shrunken copy is
homeomorphic to \(Q\), has bicollared boundary in \(X\), and is contained in the
original image. Thus, for codimension-zero embeddings of compact \(4\)-manifolds,
a separate locally flat hypothesis is redundant after replacing the image by a
homeomorphic collar-shrunken copy.

Recall that if \(Y\) is a closed oriented \(3\)-manifold, its torsion linking form
is the nonsingular symmetric pairing
\[
\lambda_Y\colon \operatorname{Tor}H_1(Y;\Z)\times
\operatorname{Tor}H_1(Y;\Z)\to \Q/\Z.
\]
We call \(\lambda_Y\) \emph{hyperbolic} if
\[
\operatorname{Tor}H_1(Y;\Z)=A\oplus B
\]
for subgroups \(A\) and \(B\) on which \(\lambda_Y\) vanishes identically.

\begin{proposition}[Hantzsche obstruction]\label{prop:hantzsche}
If a closed connected oriented \(3\)-manifold \(Y\) embeds locally flatly in
\(S^4\), then its torsion linking form \(\lambda_Y\) is hyperbolic.
\end{proposition}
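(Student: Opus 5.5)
The plan is the classical Hantzsche argument, carried out in the topological locally flat category. Suppose \(Y\subset S^4\) is a locally flat embedding. Since \(Y\) is a closed connected codimension-one submanifold, \(H_3(S^4;\F_2)=0\) forces \(Y\) to separate. A locally flat closed codimension-one submanifold of an orientable manifold is two-sided and bicollared (Brown's collaring theorem applied on each side). So
\[
S^4=U\cup_Y V,
\]
where \(U\) and \(V\) are compact oriented topological \(4\)-manifolds with \(\partial U=Y=-\partial V\).

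The next step is a Mayer--Vietoris computation with \(\Z\) coefficients, using \(H_1(S^4)=H_2(S^4)=0\). It gives
\[
H_1(Y)\cong H_1(U)\oplus H_1(V),
\qquad
H_2(Y)\cong H_2(U)\oplus H_2(V).
\]
Taking torsion in the first isomorphism yields
\[
\operatorname{Tor}H_1(Y)=A\oplus B,
\]
where \(A\) and \(B\) are the subgroups mapping isomorphically onto \(\operatorname{Tor}H_1(U)\) and \(\operatorname{Tor}H_1(V)\) respectively. More precisely, define
\[
A:=\ker\bigl(\operatorname{Tor}H_1(Y)\to H_1(V)\bigr),
\qquad
B:=\ker\bigl(\operatorname{Tor}H_1(Y)\to H_1(U)\bigr).
\]
The splitting then says \(\operatorname{Tor}H_1(Y)=A\oplus B\).

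It remains to show that \(\lambda_Y\) vanishes on \(A\) and on \(B\). The standard fact is that the kernel \(K\) of \(\operatorname{Tor}H_1(Y)\to H_1(V)\) is isotropic for \(\lambda_Y\). To prove it, take \(x,y\in K\) with \(ny=0\). Choose a \(2\)-chain \(c\) in \(Y\) with \(\partial c=ny\). Since \(y\) bounds in \(V\), choose \(d\) in \(V\) with \(\partial d=y\), so that \(nd-c\) is a relative \(2\)-cycle. Then \(\lambda_Y(x,y)\) can be computed as \(\frac1n\) times the intersection number in \(V\) of a \(2\)-chain bounded by \(x\) with the class \([nd-c]\). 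This equals the integer intersection of \([d_x]\in H_2(V,Y)\) with \([d]\)-type relative classes, modulo the cycle correction. It is therefore an integer, and \(\lambda_Y(x,y)=0\) in \(\Q/\Z\).

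Cleanest is the cohomological formulation. The linking form corresponds to the Bockstein pairing \(\operatorname{Tor}H^2(Y)\times\operatorname{Tor}H^2(Y)\to\Q/\Z\). The image of \(\operatorname{Tor}H^2(V)\to H^2(Y)\) is isotropic because cup products and Bocksteins from \(V\) evaluate on \([Y]=\partial[V,Y]\), which vanishes by naturality. Poincaré--Lefschetz duality on the topological manifold \(V\) identifies this image with \(A\). Here I would also verify that the kernel equals the image, using \(H^*(S^4)\) trivial to pin down the ranks, but isotropy is all that is needed. The symmetric argument with \(U\) shows \(\lambda_Y|_B=0\). Together with \(\operatorname{Tor}H_1(Y)=A\oplus B\), this gives hyperbolicity.

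The main obstacle is making the isotropy step rigorous in the topological category. This needs Poincaré--Lefschetz duality and Bockstein naturality for the compact topological manifolds \(U\) and \(V\), which hold since they are compact ENR manifolds with collared boundary. A second point to handle is that the splitting of \(\operatorname{Tor}H_1(Y)\) really matches the kernels \(A\) and \(B\); the Mayer--Vietoris isomorphism supplies this directly.
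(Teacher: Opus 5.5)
Your proposal follows the paper's proof: you split \(\operatorname{Tor}H_1(Y)\) into the two kernels by Mayer--Vietoris, then get isotropy from the Bockstein cup-product formula for \(\lambda_Y\) together with \(i_*[Y]=0\) in \(H_3(V)\). The one step to make explicit is that every element of \(A\) is the restriction of a \emph{torsion} class in \(H^2(V)\); this follows from injectivity of \(H^2(V)\to H^2(Y)\) (cohomological Mayer--Vietoris with \(H^2(S^4)=H^3(S^4)=0\), equivalently the surjectivity of \(H_2(Y)\to H_2(V)\) that the paper records), and it is this injectivity, not a rank count, that the \(S^4\) hypothesis supplies at this point.
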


\begin{proof}
Let \(S^4=A\cup_Y B\), where \(A\) and \(B\) are the closures of the two
complementary components of \(S^4\setminus Y\). Since \(Y\subset S^4\) is locally
flat, \(A\) and \(B\) are compact oriented topological \(4\)-manifolds with common
boundary \(Y\).

The Mayer--Vietoris sequence for \(S^4=A\cup_Y B\), using
\[
H_1(S^4;\Z)=H_2(S^4;\Z)=0,
\]
gives an isomorphism
\[
H_1(Y;\Z)\cong H_1(A;\Z)\oplus H_1(B;\Z).
\]
Restricting to torsion gives
\[
\operatorname{Tor}H_1(Y;\Z)\cong
\operatorname{Tor}H_1(A;\Z)\oplus \operatorname{Tor}H_1(B;\Z).
\]
Let
\[
L_A:=\ker\{\operatorname{Tor}H_1(Y;\Z)\to \operatorname{Tor}H_1(A;\Z)\}
\]
and
\[
L_B:=\ker\{\operatorname{Tor}H_1(Y;\Z)\to \operatorname{Tor}H_1(B;\Z)\}.
\]
Under the above splitting, \(L_A\) and \(L_B\) are complementary subgroups. It
remains to show that both are isotropic for the linking form.

We prove this for \(L_A\); the proof for \(L_B\) is identical. The same
Mayer--Vietoris sequence shows that
\[
H_2(Y;\Z)\to H_2(A;\Z)
\]
is surjective. Hence the long exact sequence of the pair \((A,Y)\) implies that
\[
H_2(A,Y;\Z)\to H_1(Y;\Z)
\]
has image equal to \(\ker\{H_1(Y;\Z)\to H_1(A;\Z)\}\). Thus \(L_A\) is the
torsion part of the image of \(H_2(A,Y;\Z)\to H_1(Y;\Z)\).

By Poincar\'e--Lefschetz duality for \(A\) and Poincar\'e duality for \(Y\), this
torsion image identifies with the image of the restriction map
\[
i^*\colon \operatorname{Tor}H^2(A;\Z)\to \operatorname{Tor}H^2(Y;\Z),
\]
where \(i\colon Y\hookrightarrow A\) is inclusion.

We use the standard cohomological formula for the torsion linking form. If
\(x,y\in \operatorname{Tor}H^2(Y;\Z)\), then
\[
\lambda_Y(x,y)=\langle x\smile \widetilde y,[Y]\rangle,
\]
where \(\widetilde y\in H^1(Y;\Q/\Z)\) maps to \(y\) under the Bockstein for
\[
0\to \Z\to \Q\to \Q/\Z\to 0.
\]
Take \(x=i^*\alpha\) and \(y=i^*\beta\), where
\[
\alpha,\beta\in \operatorname{Tor}H^2(A;\Z).
\]
Choose \(\widetilde\beta\in H^1(A;\Q/\Z)\) whose Bockstein is \(\beta\). By
naturality, \(i^*\widetilde\beta\) is a Bockstein lift of \(y=i^*\beta\). Therefore
\[
\lambda_Y(x,y)
=
\langle i^*\alpha\smile i^*\widetilde\beta,[Y]\rangle
=
\langle \alpha\smile\widetilde\beta,i_*[Y]\rangle
=0,
\]
because \(i_*[Y]=0\in H_3(A;\Z)\). Hence \(L_A\) is isotropic. Similarly \(L_B\)
is isotropic. Therefore the torsion linking form of \(Y\) is hyperbolic.
\end{proof}

\begin{lemma}\label{lem:rp3-sum-not-hyperbolic}
The torsion linking form of \(\RP^3\#(-\RP^3)\) is
\[
\left(\frac12\right)\oplus\left(\frac12\right)
\]
on \((\Z/2)^2\), up to the usual overall sign convention. This form is not
hyperbolic.
\end{lemma}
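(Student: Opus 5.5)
The proof has two parts: first compute the linking form of \(\RP^3\#(-\RP^3)\), then show by a short finite check that this form is not hyperbolic.

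\emph{Computing the form.} Connected sum of closed oriented \(3\)-manifolds gives \(H_1(Y_1\#Y_2;\Z)\cong H_1(Y_1;\Z)\oplus H_1(Y_2;\Z)\). Under this splitting the torsion linking form is the orthogonal direct sum \(\lambda_{Y_1}\oplus\lambda_{Y_2}\), since representing cycles and the \(2\)-chains they bound can be placed in the two separated summands. Reversing orientation negates the linking form, so \(\lambda_{-\RP^3}=-\lambda_{\RP^3}\).

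For \(\RP^3=L(2,1)\), the group \(H_1\cong\Z/2\) is generated by a core circle \(c\) of a genus-one Heegaard splitting. Twice \(c\) bounds a meridian disk of the other solid torus, and that disk meets \(c\) once. This gives \(\lambda(c,c)=\pm\tfrac12\), the standard \(\lambda_{L(p,q)}=\pm q/p\) with \(p=2,q=1\). Since \(-\tfrac12=\tfrac12\) in \(\Q/\Z\), both summands are the form \((\tfrac12)\). Hence the form on \((\Z/2)^2\) has diagonal matrix \(\mathrm{diag}(\tfrac12,\tfrac12)\), up to the overall sign convention.

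\emph{Non-hyperbolicity.} Suppose \((\Z/2)^2=A\oplus B\) with \(\lambda\) vanishing identically on both \(A\) and \(B\). Nondegeneracy forbids \(A\) or \(B\) being the whole group, because the form is nonzero. So \(A\) and \(B\) each have order \(2\), and each is generated by a nonzero vector \(v\) with \(\lambda(v,v)=0\).

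The three nonzero vectors are \(e_1\), \(e_2\) and \(e_1+e_2\). Their self-linkings are \(\tfrac12\), \(\tfrac12\), and \(\tfrac12+\tfrac12+2\lambda(e_1,e_2)=1=0\). So \(e_1+e_2\) is the only isotropic nonzero vector. Then \(A=B=\langle e_1+e_2\rangle\), which contradicts \(A\cap B=0\).

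An alternative check: an isotropic splitting of a \(\Z/2\)-valued form would make \(\lambda(v,v)\) vanish on \(A\) and \(B\), hence make \(v\mapsto\lambda(v,v)\), a homomorphism in characteristic \(2\), vanish identically. This fails at \(e_1\).

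\emph{Main obstacle.} The only delicate step is the identification \(\lambda_{\RP^3}=(\pm\tfrac12)\), which depends on sign conventions. The argument is insensitive to these, because \(\tfrac12=-\tfrac12\) in \(\Q/\Z\). I would cite the standard lens-space computation rather than redo it.
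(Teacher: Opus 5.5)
Your proposal is correct and follows essentially the same route as the paper. You identify the form as $(\tfrac12)\oplus(\tfrac12)$ via orthogonal sums, orientation reversal and $-\tfrac12=\tfrac12$, then show that $e_1+e_2$ is the only nonzero isotropic vector, so no two complementary isotropic subgroups of order $2$ exist; your extra observations that neither summand can be the whole group, and that $v\mapsto\lambda(v,v)$ is a homomorphism here, are valid refinements. One small slip is in your heuristic for $\lambda_{\RP^3}$: the meridian disk of the other solid torus is disjoint from $c$ and instead meets the \emph{other} core once (that core represents the same class in $\Z/2$, and $2c$ bounds the disk only together with a chain in the first solid torus), but since you, like the paper, ultimately cite the standard lens-space value, this does not affect the argument.
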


\begin{proof}
The standard surgery description of \(\RP^3=L(2,1)\) gives
\[
H_1(\RP^3;\Z)\cong \Z/2
\]
with generator self-linking \(1/2\in \Q/\Z\). Reversing orientation changes the
linking form by a sign, but \(-1/2=1/2\) in \(\Q/\Z\). Since the linking form of a
connected sum is the orthogonal direct sum of the linking forms of the summands,
the linking form of \(\RP^3\#(-\RP^3)\) is
\[
\left(\frac12\right)\oplus\left(\frac12\right)
\]
on \((\Z/2)^2\).

Let \(e_1,e_2\) be the standard basis of \((\Z/2)^2\). Then
\[
\lambda(e_1,e_1)=\frac12,
\qquad
\lambda(e_2,e_2)=\frac12,
\qquad
\lambda(e_1+e_2,e_1+e_2)=\frac12+\frac12=0\in\Q/\Z.
\]
Thus the only nonzero isotropic subgroup is
\[
\langle e_1+e_2\rangle.
\]
A hyperbolic splitting of a group of order \(4\) would require two complementary
isotropic subgroups of order \(2\). The only order-\(2\) subgroups complementary to
\(\langle e_1+e_2\rangle\) are \(\langle e_1\rangle\) and \(\langle e_2\rangle\),
and neither is isotropic. Hence the form is not hyperbolic.
\end{proof}

\begin{theorem}\label{thm:connected-boundary-counterexample}
The manifold \(W\) is compact, connected, and oriented, and \(\partial W\) is
connected. Moreover, if \(N\) is a compact oriented topological \(4\)-manifold
whose interior contains a bicollared embedded copy of \(\RP^3\), then for every
\(n\ge 1\) there are pairwise disjoint codimension-zero embeddings
\[
\bigsqcup_{i=1}^n W\hookrightarrow \operatorname{int}(N).
\]
In particular, this holds for \(N=\RP^3\times S^1\). Nevertheless \(W\) does not
embed in \(\R^4\). If \(N\) is smooth and the embedded copy of
\(\RP^3\subset \operatorname{int}(N)\) is smooth, then the packings above may be
chosen smooth.
\end{theorem}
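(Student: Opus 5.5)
The plan has three parts: the basic properties of \(W\), the packing construction, and the nonembedding argument.

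\emph{Basic properties.} \(P\) is a compact connected orientable \(3\)-manifold with boundary \(S^2\), so \(W=P\,\widetilde{\times}\,[0,1]\) is compact, connected and orientable. Its boundary is the double of \(P\) along \(S^2\):
\[
\partial W=(P\times\{0\})\cup(\partial P\times[0,1])\cup(P\times\{1\}).
\]
This is connected because \(\partial P\times[0,1]\) joins the two copies. Using the orientation induced as a boundary, the two copies of \(P\) acquire opposite orientations. Hence \(\partial W\cong \RP^3\#(-\RP^3)\).

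\emph{Packing.} Let \(Y\cong\RP^3\) be bicollared in \(\operatorname{int}(N)\), with a collar \(c\colon Y\times[-1,1]\hookrightarrow\operatorname{int}(N)\).
\begin{itemize}
\item Choose pairwise disjoint intervals \([a_i,b_i]\subset(-1,1)\) for \(i=1,\dots,n\).
\item The sets \(c(P\times[a_i,b_i])\) are pairwise disjoint.
\item Each is homeomorphic to \(P\times[0,1]\), which as a topological manifold with boundary is \(W\), since rounding corners does not change the homeomorphism type.
\end{itemize}
If \(N\) and \(Y\) are smooth, take \(c\) to be a smooth tubular neighborhood and embed the rounded product smoothly inside each \(c(P\times[a_i,b_i])\). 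For \(N=\RP^3\times S^1\), the fiber \(\RP^3\times\{pt\}\) is bicollared.

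\emph{Nonembedding.} Suppose \(W\) embeds in \(\R^4\subset S^4\). By the collar-shrinking observation we may assume \(\partial W\) is bicollared, hence locally flat, in \(S^4\). Then \(Y=\RP^3\#(-\RP^3)\) embeds locally flatly in \(S^4\), so by Proposition~\ref{prop:hantzsche} its torsion linking form is hyperbolic. This contradicts Lemma~\ref{lem:rp3-sum-not-hyperbolic}.

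The main point requiring care is the identification \(\partial W\cong\RP^3\#(-\RP^3)\). I would prove it directly: \(\partial(P\times[0,1])\) is \(P\cup_{S^2}(S^2\times I)\cup_{S^2}\bar P\). Gluing \(P\) to \(\bar P\) along their boundary sphere by the identity gives the connected sum \(\RP^3\#\overline{\RP^3}\). The orientation sign does not affect the argument anyway, since \(-\tfrac12=\tfrac12\) in \(\Q/\Z\).
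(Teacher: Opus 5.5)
Your proposal is correct and follows essentially the same route as the paper: boundary identified as the double \(\RP^3\#(-\RP^3)\), packing by rounded products \(P\times J_j\) inside a bicollar, and nonembedding via collar-shrinking, Proposition~\ref{prop:hantzsche}, and Lemma~\ref{lem:rp3-sum-not-hyperbolic}. You also spell out one detail the paper leaves implicit, namely that \(\RP^3\times\{pt\}\) is bicollared in \(\RP^3\times S^1\).
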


\begin{proof}
The product \(P\times[0,1]\) is compact, connected, and oriented, and rounding
corners does not change these properties. Its boundary is the union
\[
(P\times\{0\})\cup(\partial P\times[0,1])\cup(P\times\{1\}),
\]
with corners rounded. Since \(P\) is connected and \(\partial P\cong S^2\) is
nonempty, this boundary is connected. Equivalently,
\[
\partial W\cong D(P)\cong \RP^3\#(-\RP^3),
\]
the double of \(P\).

Now suppose \(\RP^3\subset \operatorname{int}(N)\) is bicollared. Since both
\(\RP^3\) and \(N\) are oriented, this hypersurface is two-sided, so after choosing
the side determined by the orientations it has a product bicollar
\[
\RP^3\times(-\varepsilon,\varepsilon)\subset \operatorname{int}(N).
\]
Choose pairwise disjoint compact intervals
\[
J_1,\dots,J_n\subset(-\varepsilon,\varepsilon).
\]
For each \(j\), the product
\[
P\times J_j\subset \RP^3\times J_j
\]
with its corners rounded is a codimension-zero submanifold of \(N\), homeomorphic
to \(W\). These rounded products are pairwise disjoint, giving the required
packing. If \(N\) is smooth and the embedded copy of \(\RP^3\) is smooth, the
bicollar may be chosen smooth, and the same construction gives smooth
codimension-zero embeddings.

It remains to show that \(W\) does not embed in \(\R^4\). If such an embedding
existed, the collar-shrinking observation above would give a homeomorphic copy of
\(W\) embedded in \(\R^4\) with locally flat boundary. Hence \(\partial W\) would
embed locally flatly in \(S^4\). By the boundary calculation above,
\[
\partial W\cong \RP^3\#(-\RP^3).
\]
By Proposition~\ref{prop:hantzsche}, the torsion linking form of any closed
oriented \(3\)-manifold embedded locally flatly in \(S^4\) is hyperbolic. But
Lemma~\ref{lem:rp3-sum-not-hyperbolic} shows that the torsion linking form of
\(\RP^3\#(-\RP^3)\) is not hyperbolic. This contradiction proves that no embedding
\(W\hookrightarrow \R^4\) exists.
\end{proof}

\begin{remark}\label{rem:whitney-obstruction}
There is also a very short smooth obstruction, which explains geometrically why
this example is incompatible with \(\R^4\). Choose the standard embedded
\(\RP^2\subset \RP^3\), and choose the ball \(B^3\) disjoint from it. Then
\(\RP^2\subset P\), and hence
\[
\RP^2\times[0,1]\subset P\times[0,1].
\]
The middle slice \(\RP^2\times\{1/2\}\subset W\) has a normal bundle with a global
nowhere-zero section, namely the section in the interval direction. Thus its
twisted normal Euler number in \(W\) is \(0\). If \(W\) embedded smoothly in
\(\R^4\), this would produce a smooth embedding \(\RP^2\subset \R^4\) with twisted
normal Euler number \(0\), contradicting Whitney's congruence
\[
e(\RP^2\subset\R^4)\equiv 2\pmod 4
\]
for smooth embeddings \cite{Whitney1941,Massey}.
\end{remark}

\begin{remark}\label{rem:counterexample-context}
Theorem~\ref{thm:connected-boundary-counterexample} answers
Question~\ref{ques:connected-boundary-packing} in the affirmative and supersedes
the simpler disconnected-boundary lens-space product example. It is also
compatible with the earlier obstructions in \cite{CFSZCorr}: for
\(\partial W\cong \RP^3\#(-\RP^3)\), the linking form
\((1/2)\oplus(1/2)\) is split metabolic, with diagonal metabolizer, but it is not
hyperbolic. Thus the example satisfies the split-metabolic conclusion forced by
asymptotic packing while failing the stronger boundary condition forced by an
actual embedding in \(S^4\). Finally, the normal-Euler excess theorem above does
not apply to this example. The obstruction to \(W\subset \R^4\) is the boundary
linking form of \(\partial W\). In the smooth category,
Remark~\ref{rem:whitney-obstruction} gives an additional way to see the same
nonembedding phenomenon, namely the presence of an \(\RP^2\) whose twisted normal
Euler number would be \(0\) after embedding in \(\R^4\).
\end{remark}

\begin{acknowledgement}
The authors used ChatGPT as a tool for error detection, proof checking, and for
helping sharpen the paper from the projective-plane case to the more natural
general formulation in terms of disjoint nonorientable surfaces and identify the connected-boundary
counterexample.
\end{acknowledgement}

\end{document}